\documentclass[sn-mathphys-num]{sn-jnl}
\usepackage[utf8]{inputenc}
 \usepackage{a4wide}
\usepackage{amssymb}
\usepackage{amsmath}
\usepackage{amsthm}
\usepackage{hyperref}
\usepackage{breqn}
\usepackage{xcolor}
\usepackage{mathtools}
\usepackage[numbers]{natbib}
\usepackage{bm}
\usepackage{enumitem}
\usepackage{mathrsfs}
\usepackage{graphicx}
\usepackage{url}
\usepackage{placeins}
\usepackage[english]{babel}
\usepackage{array}
\usepackage{booktabs}
\usepackage{multirow}
\numberwithin{equation}{section}

\usepackage{graphicx}
\newtheorem{lemma}{Lemma}[section]
\theoremstyle{remark}

\def \R{{\mathbb R}}

\def \x{{\bm x}}
\def \b{{\bf b}}

\def\bmatrix#1{\left[\begin{matrix}
		#1
	\end{matrix}\right]}

\def \diag{\mathrm{diag}}

\def  \0{\bf 0}

\def \x{\bm x}

\def\bmatrix#1{\left[ \begin{matrix} #1 \end{matrix} \right]}

\def \R{{\mathbb R}}

\theoremstyle{thmstyleone}%
\newtheorem{theorem}{Theorem}
\def \x{\bm{x}}

\def \v{\bm{v}}
\def \p{\bm{p}}

\def \0{{\bf 0}}

\usepackage{algorithm}
\usepackage{euscript}
\usepackage{mathrsfs}
\usepackage{algpseudocode}

 \theoremstyle{definition}%
\newtheorem{example}{Example}%
\newtheorem{remark}{Remark}%

\begin{document}

\title[On a Class of Block-Regularized Preconditioned Iterative Methods for Indefinite Least-Squares Problems]{On a Class of Block-Regularized Preconditioned Iterative Methods for Indefinite Least-Squares Problems}



\author[1]{\fnm{Pinki} \sur{Khatun}}\email{pinkikhatun.nla@gmail.com}

\affil[1]{\orgdiv{Department of Industrial Engineering}, \orgname{University of Florence}, \orgaddress{\street{Viale Morgagni 40/44}, \city{Florence}, \postcode{50134},  \country{Italy}}}

\abstract{This paper proposes a novel block-regularized splitting (BRS) framework for the efficient solution of indefinite least-squares (ILS) problems. Based on a block-wise regularization strategy incorporated into a matrix splitting scheme, we develop a BRS iterative method together with an effective BRS preconditioner. The convergence of the proposed iterative method is rigorously analyzed, and spectral bounds for the BRS-preconditioned matrix are established. To further enhance computational performance, we introduce a relaxed BRS (RBRS) preconditioner, which provides improved spectral properties and significantly accelerates the convergence of Krylov subspace methods. Extensive numerical experiments on both dense and sparse test problems demonstrate that the proposed BRS and RBRS preconditioners consistently outperform existing approaches in terms of iteration count, computational time, and overall efficiency. These results highlight the effectiveness and robustness of the proposed block-regularized splitting framework for solving large-scale ILS problems.
	}


\keywords{Indefinite least-squares, Iterative methods, Preconditioner, Generalized minimal residual methods.}


\pacs[MSC Classification]{65F08, 65F20, 65F50, 65F45,  65G05 }

\maketitle

\section{Introduction}
The indefinite least-squares (ILS) problem, first introduced by \citet{ILS1998}, is an extension of the classical linear least-squares problem. It can be formally expressed as:
\begin{align}\label{ILS1}
  \text{ILS}: ~~ \min (\bm{b}-A\x)^T\mathcal{J}(\bm{b}-A\x),
\end{align}
where $A \in \mathbb{R}^{m \times n}$ be a large sparse matrix with $m \geq n$, 
$\bm{b} \in \mathbb{R}^m$, and 
\[
\mathcal{J} = \bmatrix{I_p& \bf 0\\ \bf 0 & -I_q}
\]
denotes the signature matrix with $p+q = m$. 
The ILS problem reduces to the standard least-squares (LS) problem when either $p = 0$ or $q = 0$, 
in which case the corresponding quadratic form is definite. The ILS problem has found wide applications in areas such as total least-squares \cite{TLS1980, TLS1991} and $H^\infty$ smoothing \cite{Hinfinity2, Hinfinity}.
In contrast, when $p\,q > 0$, the quadratic form becomes indefinite. 
Thus, problem~\eqref{ILS1} involves minimizing an indefinite quadratic form associated with the signature matrix $\mathcal{J}$. 
Since $\mathcal{J}$ has both positive and negative inertia, the resulting minimization problem is not necessarily solvable.
The associated normal equation is
\begin{align}\label{ILS2}
    A^T\mathcal{J}A\x=A^T\bm{b}.
\end{align}
A critical structural property of the ILS problem \eqref{ILS1} resides in its Hessian matrix formulation, which is $A^T\mathcal{J}A$. Therefore, the ILS problem admits a unique solution if and only if 
\begin{center}
$A^T \mathcal{J} A$ is symmetric positive definite (SPD),     
\end{center}
which we assume to hold throughout this paper. Note that this condition also implies that $p\geq n$ and  $A(1:p, 1:n)$ has full column rank.

 Over the years, extensive research has been devoted to the computation, perturbation analysis, and applications of this concept. In this paper, we mainly focus on numerical methods.  
For small and dense ILS problems, \citet{ILS1998} developed a stable direct approach known as the QR-Cholesky method. This method first performs a QR factorization of $A$ ($A = QR$, with $Q^T Q = I$ and $R$ an upper triangular matrix), then solves 
\begin{equation}
(Q^T \mathcal{J} Q) y = Q^T \mathcal{J} b
\end{equation}
via Cholesky factorization, and finally computes the solution as $\bm{x} = R^{-1} y$. To improve efficiency, a method based on hyperbolic QR factorization was later introduced in \cite{ILAS2003}, which requires fewer operations than the QR-Cholesky method. \citet{Xu2004ILS} further enhanced stability by applying hyperbolic QR factorization to the normalized matrix, thereby ensuring backward stability. A unified analysis of these three methods was subsequently presented in \cite{Unified2021}.  

For large and sparse ILS problems, direct methods are computationally impractical, making iterative methods essential. Formulating the indefinite least-squares problem as a double saddle-point system exposes its underlying block structure, enabling the use of stable Krylov methods and effective preconditioning strategies that are not readily available in the original formulation.
\citet{BSOR2014ILS} proposed a block SOR method with a relaxation parameter, which was recently refined by \citet{USSOR2020ILS} through the USSOR method, incorporating two parameters for improved performance. Further, \citet{Uzawa2025ILS} introduced the variable parameter Uzawa method. Although stationary iterative methods for solving the ILS problem often exhibit slow convergence, their efficiency can be significantly enhanced by incorporating suitable preconditioners within Krylov subspace methods. The use of a preconditioner transforms the original system into an equivalent one, to which a Krylov subspace method is then applied. An effective preconditioner should satisfy two key properties: $(1)$ its inverse should be computationally inexpensive to apply, and $(2)$ the resulting preconditioned system should have a substantially reduced condition number. Nevertheless, research on preconditioning techniques for the ILS problem remains relatively limited. Early contributions include preconditioned conjugate gradient methods by \citet{PCG2011ILS} and preconditioners based on the incomplete hyperbolic Gram–Schmidt process \cite{Gram2013ILS}. More recently, \cite{BSP2025OILS} introduced three block-splitting preconditioners. Further studies for preconditioning techniques have been done in \cite{ahmad2025robust, ahmad2026class,li2026double,  li2026gmres, BUT2025,  AAU2025, ADI2025, SS2026, li2026two_nys}.

Modern large-scale data assimilation demands highly stable optimization frameworks capable of processing sparse, indefinite measurement matrices without sacrificing numerical precision.   This paper addresses these challenges by developing an efficient preconditioning framework for the augmented system formulation of the ILS problem, which preserves matrix sparsity, avoids squaring the condition number, and ensures stable and scalable convergence. By transforming the minimization problem into an augmented system and merging the idea of the shift-splitting technique and the Schur preconditioner technique, we propose a new regularized splitting of the coefficient matrix. This splitting gives a stationary iterative method and a preconditioner to accelerate the rate of the Krylov subspace iterative methods.

The main contribution of the paper is highlighted as follows:
\begin{itemize}
    \item We introduce a block‐regularized splitting of the coefficient matrix and develop a corresponding stationary iterative method for solving the ILS problem.

\item Building on this splitting, we design a block‐regularized splitting (BRS) preconditioner to accelerate the convergence of Krylov subspace methods. In addition, we present a relaxed variant of the BRS preconditioner, termed as the RBRS preconditioner, for further performance improvement.
\item  The convergence analysis of the proposed iterative method is performed in detail, which indicates the unconditional convergence of the method.
\item We conduct extensive numerical experiments on two sets of test problems to evaluate the performance of the proposed preconditioners. The results demonstrate that the BRS and RBRS preconditioners consistently outperform existing block-splitting preconditioners in terms of iteration counts, computational time, and robustness.
\end{itemize}

The remainder of the paper is organized as follows. In Section~\ref{sec2}, the ILS problem is reformulated in an augmented form, and a novel matrix splitting of the resulting coefficient matrix is introduced. Section~\ref{SEC3} is devoted to the convergence analysis of the proposed iterative method. In Section~\ref{SEC4}, we present the implementation details and conduct a spectral analysis of the proposed preconditioner. Numerical experiments illustrating the performance of the method are reported in Section~\ref{SEC5}. Finally, concluding remarks and directions for future work are given in Section~\ref{SEC6}.

\vspace{2mm}\textbf{Notation:} Throughout this work, $\R^{m\times n}$
 represents the set of all real matrices with $m$ rows and $n$ columns. The symbol $I_m$ denotes the identity matrix of order $m.$ We use $\0$ to denote the zero matrix of appropriate size. We use $\rho(A)$ to denote the spectral radius of the matrix $A.$ The notation $\diag(X_1,~X_2)$ denotes the block diagonal matrix with diagonal blocks $X_1$ and $X_2.$ We use $\text{Null}(A)$ to denote the null space of the matrix $A.$

 \section{Block Regularized Splitting  of the Coefficient Matrix of the Augmented System}\label{sec2}

To develop the iteration method and preconditioner, we first partition the matrix $A$ and $\bm{b}$ in the ILS problem \eqref{ILS1} as follows:
\begin{align}
    A=\bmatrix{A_1\\ A_2},~~~ \bm{b}=\bmatrix{b_1\\ b_2},
\end{align}
where $A_1\in \R^{p\times n}$ is full column rank, $A_2\in \R^{q\times n},$ $b_1\in \R^{p}$ and $b_2\in \R^{q}.$ Using this partitioning, the normal equation in \eqref{ILS2} can be expresse as:
\begin{align}\label{ILS3}
    \bmatrix{A_1 & \bf 0& I_p\\ A_2 & I_q & \bf 0\\ \bf 0&-A_2^T &A_1^T}\bmatrix{\x \\ \p\\ \v}=\bmatrix{b_1\\b_2\\ \bf 0},
\end{align}
where $\delta= [\v^T, \p^T]^T=\bm{b}-A\x.$
The above system can be further equivalently written as:
\begin{align}\label{ILS4}
    \mathcal{K}\bm{u}:=\bmatrix{I_p& A_1 & \bf 0\\ -A_1^T & \bf 0 & A_2^T\\ \bf 0&A_2 & I_q}\bmatrix{ \v\\ \x \\ \p}=\bmatrix{b_1\\ \bf 0 \\b_2}.
\end{align}

Consider the following block regularized matrix splitting:

\begin{align}\label{BRS:spllitng}
    \mathcal{K}
    = \bmatrix{(\alpha+1)I_p & A_1& \bf 0\\ -A_1^T &S &\bf 0\\ \bf 0 & A_2 & (\alpha+1)I_q}- \bmatrix{\alpha I_p & \bf 0& \bf 0\\ \bf 0 &S &-A_2^T\\ \bf 0 & \bf 0 & \alpha I_q}:=\mathscr{P}_{\text{BRS}}-\mathcal{Q}_{\text{BRS}},
\end{align}
where $S\in \R^{n\times n}$ is a symmetric positive definite (SPD) matrix and $\alpha>0.$ 
Using the above matrix splitting, we have proposed the following iteration scheme for solving the ILS problem \eqref{ILS1}.

\vspace{2mm}
\noindent\textbf{Block regularized splitting (BRS) iteration method:} Assume that the solution of the ILS problem \eqref{ILS1} exists and satisfies uniqueness assumptions. Given an initial guess $\bm{u}_0=[\v^T, \bm{x}^T, \p^T]^T,$ until iteration sequence $ \bm{u}_k=[\v^T, \bm{x}^T, \p^T]^T$ converges, compute:
\begin{align}
    \bm{u}_{k+1}=\mathscr{P}_{\text{BRS}}^{-1}\mathcal{Q}_{\text{BRS}}\bm{u}_k+\bm{d}.
\end{align}

The algorithmic version of the BRS Iterative scheme to solve ILS problem~\eqref{ILS1} is given in following: 



\begin{algorithm}
\caption{The BRS Iterative scheme for solving the ILS problem}
\label{alg:ILS}

\begin{algorithmic}[1]
\Require The ILS problem~\eqref{ILS1} admits a unique solution.
\State \textbf{Initialization:} Choose an initial guess
\[
\bm{u}_0 = \bmatrix{ \v_0^T, & \bm{x}_0^T, & \p_0^T}^T.
\]

\State \textbf{Iteration:} For $k = 0,1,2,\ldots$, update
\[
\bm{u}_{k+1} = \mathcal{G} \bm{u}_k + \bm{d},
\]
where $\bm{u}_k = \bmatrix{ \v_k^T, & \bm{x}_k^T, & \p_k^T}^T,$ $\mathcal{G}=\mathscr{P}_{\text{BRS}}^{-1}\mathcal{Q}_{\text{BRS}}$ and $\bm{d}=\mathscr{P}_{\text{BRS}}^{-1}\bm{b}.$
\State \textbf{Stopping criterion:} 
Terminate when
\[
\|\bm{u}_{k+1} - \bm{u}_k\| < \epsilon,
\]
for a prescribed tolerance $\epsilon > 0$.
\end{algorithmic}
\end{algorithm}
\noindent

The matrix $\mathcal{G}$ in Algorithm \ref{alg:ILS} is the iteration matrix of the BRS iteration method. Therefore, according to \cite{YSAAD}, the BRS iterative scheme converges to the unique solution of the ILS problem if and only if the spectral radius of the iteration matrix $\mathcal{G}$ satisfies
\[
\rho(\mathcal{G})=\rho(\mathscr{P}^{-1}_{\text{BRS}}\mathcal{Q}_{\text{BRS}}) < 1.
\]
This condition guarantees that the error between successive iterates decays geometrically, ensuring both stability and convergence of the iterative method.

Moreover, the BRS splitting in \eqref{BRS:spllitng} introduces a new preconditioner to accelerate the convergence of Krylov subspace methods, which is  given by:
\begin{align}
    \mathscr{P}_{\text{BRS}}=\bmatrix{(\alpha +1)I & A_1& \bf 0\\ -A_1^T&S &\0\\ \bf 0 & A_2 & (\alpha+1)I}.
\end{align}
We termed this preconditioner the BRS preconditioner.


\section{Convergence Analysis of the Proposed BRS Iterative Method}\label{SEC3}
     This section is devoted to the convergence analysis of the BRS iterative method. Any stationary iteration method discussed in Algorithm \ref{alg:ILS} will converge if and only if its iteration matrix has a spectral radius less than $1$ \cite{YSAAD}. Thus, we need to show that $\rho(\mathcal{G})<1.$

     The following result is important to prove the main theorem.
     \begin{lemma}[\cite{BOOK1971}]\label{Lemma1}
Both roots of the real quadratic equation 
\[
x^2 - \beta x + \alpha = 0
\]
lie inside the unit circle (i.e., have modulus less than $1$) if and only if 
\[
|\alpha| < 1 \quad \text{and} \quad |\beta| < 1 + \alpha.
\]
\end{lemma}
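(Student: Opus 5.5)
The plan is to argue directly from the factorization $x^2-\beta x+\alpha=(x-r_1)(x-r_2)$. Vieta's relations give $r_1r_2=\alpha$ and $r_1+r_2=\beta$. Write $f(x)=x^2-\beta x+\alpha$. The condition $|\beta|<1+\alpha$ is the same as the pair of inequalities $f(1)=1-\beta+\alpha>0$ and $f(-1)=1+\beta+\alpha>0$. I will split according to the sign of the discriminant $\beta^2-4\alpha$, since the roots are then either a complex-conjugate pair or both real.

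\emph{Necessity.} Suppose $|r_1|,|r_2|<1$.
\begin{itemize}
\item In both cases $|\alpha|=|r_1||r_2|<1$.
\item For the second condition, use $f(1)=(1-r_1)(1-r_2)$ and $f(-1)=(1+r_1)(1+r_2)$.
\item If the roots are real, each factor $1\mp r_i$ is positive because $|r_i|<1$, so $f(\pm1)>0$.
\item If the roots are complex conjugates $r,\bar r$, then $f(\pm1)=|1\mp r|^2>0$, and this is strict because $r\neq\pm1$.
\end{itemize}
Hence $1\pm\beta+\alpha>0$, that is, $|\beta|<1+\alpha$.

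\emph{Sufficiency.} Assume $|\alpha|<1$ and $|\beta|<1+\alpha$.
\begin{itemize}
\item \emph{Complex roots} ($\beta^2<4\alpha$, so $\alpha>0$). The roots are conjugate, so $|r_1|^2=r_1\bar r_1=r_1r_2=\alpha<1$.
\item \emph{Real roots} ($\beta^2\ge4\alpha$). We have $f(1)>0$ and $f(-1)>0$. The vertex $\beta/2$ satisfies $|\beta/2|<(1+\alpha)/2<1$ because $\alpha<1$, so the vertex lies in $(-1,1)$. A monic quadratic that is positive at $\pm1$ and has its vertex inside $(-1,1)$ has both real roots in $(-1,1)$. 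Indeed, the roots lie symmetrically about the vertex, and $f$ keeps a constant positive sign on each of $(-\infty,r_1)$ and $(r_2,\infty)$. If a root lay outside $[-1,1]$, then $\pm1$ would fall in the interval between the roots, where $f\le0$, a contradiction.
\end{itemize}

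The only delicate step is this last intermediate-value/vertex argument in the real-root case. I would write it out carefully, including the double-root case $\beta^2=4\alpha$, where $r_1=r_2=\beta/2$ and the bound $|\beta/2|<1$ settles it immediately.
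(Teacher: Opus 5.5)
Your proof is correct and complete. Rewriting $|\beta|<1+\alpha$ as $f(1)>0$ and $f(-1)>0$, using $f(\pm1)=(1\mp r_1)(1\mp r_2)$ for necessity, and splitting on the discriminant for sufficiency is the standard elementary proof of this criterion. In the conjugate case the modulus is $\sqrt{\alpha}$; in the real case you use a sign argument.

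The paper gives no proof of its own. The lemma is simply quoted from the cited 1971 reference, so your argument is a self-contained justification rather than an alternative route to compare against.

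Two small remarks on the real-root case. First, you need the roots in the open interval $(-1,1)$, so phrase the contradiction as ``if a root lay outside $(-1,1)$''. A root equal to $\pm1$ is excluded at once, since $f(\pm1)\neq0$.

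Second, the vertex step can be replaced by Vieta. Because $f(\pm1)>0$, neither $1$ nor $-1$ lies in $[r_1,r_2]$. The only configurations with a root outside $(-1,1)$ are then both roots $>1$ or both roots $<-1$. Each forces $r_1r_2=\alpha>1$, contradicting $|\alpha|<1$.
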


     Let $\lambda$ be an eigenvalue of the iteration matrix $\mathcal{G}$ and $\bm{p}=[u^T,v^T,w^T]^T\in \R^{p+n+q}$ is the corresponding eigenvector Then
     \begin{align}
         \mathcal{Q}_{\text{BRS}}\bmatrix{u\\v\\w}=\lambda \mathscr{P}_{\text{BRS}}\bmatrix{u\\v\\w}
     \end{align}
     The above is equivalent to the following  linear systems of equations:
     \begin{align}\label{eq32}
       &  \lambda (\alpha +1) u+\lambda A_1v=\alpha u,\\ \label{eq33}
         &\lambda Sv-\lambda A_1^Tu= Sv-A_2^Tw,\\ \label{eq34}
         &\alpha w=\lambda A_2v+\lambda (\alpha+1)w.
     \end{align}
     We have the following result.
\begin{lemma}
    Assume $A_1 \in \mathbb{R}^{p \times n}$ is of full column rank, 
$A_2 \in \mathbb{R}^{q \times n}$, with $p+q = m$, and let 
$b_1 \in \mathbb{R}^p$, $b_2 \in \mathbb{R}^q$. 
Let $\alpha > 0$ be a positive parameter and $S$ be an SPD matrix. Let   $\lambda$ be an eigenvalue of the iteration matrix $\mathcal{G},$  then
\(
\lambda \neq \pm 1.
\)
\end{lemma}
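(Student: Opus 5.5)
We argue by contradiction on each case separately, working directly with the eigen-equations \eqref{eq32}--\eqref{eq34}, where $\bm{p}=[u^T,v^T,w^T]^T\neq \0$. For $\lambda=1$ the equations reduce to $\mathcal{K}\bm{p}=\0$, since $\mathscr{P}_{\text{BRS}}-\mathcal{Q}_{\text{BRS}}=\mathcal{K}$. It then suffices to show that $\mathcal{K}$ is nonsingular.

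\textbf{The case $\lambda=1$.} From \eqref{eq32} with $\lambda=1$ we get $u=-A_1v$. From \eqref{eq34} we get $w=-A_2v$. Substituting into \eqref{eq33}, which becomes $-A_1^Tu+A_2^Tw=\0$, gives
\[
(A_1^TA_1-A_2^TA_2)v=A^T\mathcal{J}Av=\0.
\]
Since $A^T\mathcal{J}A$ is SPD by the standing assumption, $v=\0$. Hence $u=\0$ and $w=\0$, contradicting $\bm{p}\neq\0$.

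\textbf{The case $\lambda=-1$.} Equation \eqref{eq32} becomes $-(\alpha+1)u-A_1v=\alpha u$, so $u=-\frac{1}{2\alpha+1}A_1v$. Similarly \eqref{eq34} gives $w=-\frac{1}{2\alpha+1}A_2v$. Equation \eqref{eq33} becomes $-Sv+A_1^Tu=Sv-A_2^Tw$, that is, $2Sv=A_1^Tu+A_2^Tw$. Substituting yields
\[
2Sv=-\tfrac{1}{2\alpha+1}\,(A_1^TA_1+A_2^TA_2)\,v .
\]
Now take the inner product with $v$. The left side $2v^TSv$ is positive for $v\neq\0$. The right side $-\frac{1}{2\alpha+1}\bigl(\|A_1v\|^2+\|A_2v\|^2\bigr)$ is nonpositive. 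Hence $v=\0$, so $u=\0$ and $w=\0$, again a contradiction.

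Signs are the main hazard, and the main step is the $\lambda=-1$ case: it needs the signs to line up so that an SPD term faces a negative semidefinite term. This is the point where $S$ SPD and $\alpha>0$ are used, the latter guaranteeing $2\alpha+1>0$. Note that, unlike the $\lambda=1$ case, this step uses the full Gram matrix $A^TA$ rather than $A^T\mathcal{J}A$. If the signs had come out differently, one would instead need the SPD property of $A^T\mathcal{J}A$. I will recheck the sign of the $A_2^Tw$ term in \eqref{eq33} carefully at this point.
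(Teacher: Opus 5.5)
Your proof is correct and follows the same route as the paper. For $\lambda=1$ the eigen-equations collapse to $\mathcal{K}\bm{p}=\mathbf{0}$, and you make explicit the elimination to $(A_1^TA_1-A_2^TA_2)v=\mathbf{0}$ and the use of the standing assumption that $A^T\mathcal{J}A$ is SPD, which the paper leaves implicit when it asserts $\mathcal{K}$ is nonsingular. For $\lambda=-1$ you reach the same SPD system $\bigl(2S+\frac{1}{2\alpha+1}A^TA\bigr)v=\mathbf{0}$; your signs and the substitutions $u=-\frac{1}{2\alpha+1}A_1v$, $w=-\frac{1}{2\alpha+1}A_2v$ are right (they correct typos in the paper's version), so the recheck you planned is not needed.
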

  \begin{proof}
      Let $\bm{p}=[u^T,v^T,w^T]^T\in \R^{p+n+q}$ be an eigenvector of $\mathcal{G}$ corresponding to the eigenvalue $\lambda.$ If $\lambda=1,$ then from \eqref{eq32}-\eqref{eq34}, we obtain:
      \begin{align*}
          u+A_1v=\0, ~~ A_1^Tu-A_2^Tw=\0,~~ A_2v+w=\0.
      \end{align*}
      The above equation together are equivalent to $\mathcal{K}\bm{p}=\0,$ implies that $\bm{p}=\0.$ This is a contradiction with the fact that, $\bm{p}$ is an eigenvector. Thus, $\lambda\neq 1.$ Next, consider $\lambda=-1.$ Then, from \eqref{eq32}-\eqref{eq34}, we have 
      \begin{align*}
          u+A_1v=\0, ~~ 2Sv-A_1^Tu-A_2^Tw=\0,~~ A_2v+(2\alpha+ 1)w=\0.
      \end{align*}
      Thus, substituting $u=-\frac{1}{2\alpha+1}A_1^Tv$ and $w=-\frac{1}{2\alpha+1}A_2w$ in $2Sv-A_1^Tu-A_2^Tw=\0,$ we obtain:
      \begin{align*}
          \left(\frac{1}{(2\alpha+1)}A_1^TA_1+\frac{1}{(2\alpha+1)}A_2^TA_2+2S\right)v=\0.
      \end{align*}
      Since the matrix $\left(\frac{1}{(2\alpha+1)}A_1^TA_1+\frac{1}{(2\alpha+1)}A_2^TA_2+2S\right)$ is SPD, we have $v=\0.$ This gives $u=\0$ and $w=\0,$ which is not possible as $\bm{p}$ is an eigenvector. Thus, $\lambda\neq -1.$ Hence, the proof follows.
  \end{proof}

In the following theorem, we derive the unconditional convergence properties of the BRS iterative method.
  \begin{theorem}
      Assume $A_1 \in \mathbb{R}^{p \times n}$ is of full column rank and 
$A_2 \in \mathbb{R}^{q \times n}$, where $p+q = m$, with 
$b_1 \in \mathbb{R}^p$ and $b_2 \in \mathbb{R}^q$. 
Then, for any initial guess vector the BRS iterative method is unconditionally convergent 
for any $\alpha > 0$ and any SPD matrix $S$, 
that is,
\[
\rho(\mathcal{G}) < 1.
\]
  \end{theorem}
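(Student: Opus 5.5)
The plan is to reduce the block eigenvalue problem \eqref{eq32}--\eqref{eq34} to a scalar quadratic in $\lambda$ and then apply Lemma~\ref{Lemma1}. Let $\lambda$ be an eigenvalue of $\mathcal{G}$ with eigenvector $\bm{p}=[u^T,v^T,w^T]^T$, which may be complex. We then use conjugate transposes, written $v^*$.

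First I dispose of the degenerate cases.
\begin{itemize}
\item If $\lambda(\alpha+1)=\alpha$, then $\lambda=\alpha/(\alpha+1)$ and $|\lambda|<1$ directly.
\item If $v=\0$, then \eqref{eq32} and \eqref{eq34} read $\lambda(\alpha+1)u=\alpha u$ and $\lambda(\alpha+1)w=\alpha w$. Since $\bm{p}\neq\0$, at least one of $u,w$ is nonzero, so again $\lambda=\alpha/(\alpha+1)$.
\end{itemize}
Hence we may assume $v\neq\0$ and $\lambda(\alpha+1)-\alpha\neq 0$.

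Under these assumptions, \eqref{eq32} and \eqref{eq34} give
\[
u=-\frac{\lambda}{\lambda(\alpha+1)-\alpha}A_1v, \qquad w=-\frac{\lambda}{\lambda(\alpha+1)-\alpha}A_2v .
\]
Substituting these into \eqref{eq33} and multiplying by $\lambda(\alpha+1)-\alpha$ yields
\[
(\lambda-1)(\lambda(\alpha+1)-\alpha)Sv+\lambda^2A_1^TA_1v-\lambda A_2^TA_2v=\0 .
\]
Premultiplying by $v^*/(v^*v)$, set
\[
s=\frac{v^*Sv}{v^*v}>0,\qquad a=\frac{v^*A_1^TA_1v}{v^*v}>0,\qquad c=\frac{v^*A_2^TA_2v}{v^*v}\ge 0,
\]
all of which are real. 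This gives the real quadratic
\[
\lambda^2-\frac{(2\alpha+1)s+c}{(\alpha+1)s+a}\,\lambda+\frac{\alpha s}{(\alpha+1)s+a}=0 .
\]

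Now I apply Lemma~\ref{Lemma1} with $\gamma=\alpha s/((\alpha+1)s+a)$ in place of $\alpha$ and $\beta=((2\alpha+1)s+c)/((\alpha+1)s+a)$.
\begin{itemize}
\item The condition $|\gamma|<1$ is immediate, since $0<\alpha s<(\alpha+1)s+a$.
\item The condition $|\beta|<1+\gamma$, with $\beta>0$, is equivalent to $(2\alpha+1)s+c<(2\alpha+1)s+a$, that is, $c<a$.
\end{itemize}
This last inequality is exactly $v^*(A_1^TA_1-A_2^TA_2)v>0$, i.e.\ $v^*A^T\mathcal{J}Av>0$. It holds by the standing assumption, made in the introduction, that $A^T\mathcal{J}A$ is SPD. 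Both roots of the quadratic therefore lie strictly inside the unit circle, so in particular $|\lambda|<1$. Since $\lambda$ was an arbitrary eigenvalue, $\rho(\mathcal{G})<1$.

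The main obstacle is this step $|\beta|<1+\gamma$. Full column rank of $A_1$ alone, as literally stated in the theorem, does not suffice, and the proof must invoke the SPD assumption on $A^T\mathcal{J}A$, which the paper assumes throughout. A secondary point needing care is handling complex eigenvectors via $v^*$ so that the quadratic has real coefficients; this is exactly what Lemma~\ref{Lemma1} requires. The earlier lemma ($\lambda\neq\pm1$) is then only a consistency check and is not needed in this route.
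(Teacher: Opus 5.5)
Your proof is correct and follows the same route as the paper: you eliminate $u$ and $w$ using \eqref{eq32} and \eqref{eq34}, reduce \eqref{eq33} via Rayleigh quotients to the quadratic \eqref{eq39}, and verify the hypotheses of Lemma~\ref{Lemma1}, where the decisive inequality $c<a$ comes from the standing SPD assumption on $A^T\mathcal{J}A$ rather than from the theorem's stated hypotheses. Your use of $v^*$ (which keeps the quotients real for complex eigenvectors) and your direct handling of the case $\lambda=\alpha/(\alpha+1)$ tidy up two points the paper treats loosely, but otherwise the argument is the paper's.
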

  \begin{proof}
     Let $\bm{p}=[u^T,v^T,w^T]^T\in \R^{p+n+q}$ be an eigenvector of $\mathcal{G}$ corresponding to the eigenvalue $\lambda.$ Then \eqref{eq32}-\eqref{eq34} holds. Consider $v=0.$ Then, from \eqref{eq32}, we have 
      \begin{align}
          &\lambda (\alpha +1)u- \alpha u=\0.
      \end{align}
      If $\lambda \neq \frac{\alpha}{\alpha+1},$ from above we have $u=\bf 0$ and $w=\bf 0,$ which is a contradiction to the fact that $\bm{p}$ is an eigenvector. Hence, $\lambda = \frac{\alpha}{\alpha+1}.$ Then $|\lambda| < 1$ if and only if 
\(
\left| \frac{\alpha}{\alpha + 1} \right| < 1,
\)
which holds true for all $\alpha > 0$. Similarly, if $v\neq \0$, then $\lambda \neq \frac{\alpha}{\alpha+1}.$ From, \eqref{eq32} and \eqref{eq34}, we have
\begin{align}
    u=\frac{\lambda A_1v}{\alpha-\lambda (\alpha +1)}~\text{and}~ w=\frac{\lambda A_2v}{\alpha-\lambda (\alpha +1)},
\end{align}
respectively. Substituting these values in \eqref{eq33}, we obtain:
\begin{align}\label{eq37}
    \lambda Sv-\frac{\lambda^2 A_1^TA_1v}{\alpha-\lambda (\alpha +1)}=-\frac{\lambda A_2^TA_2v}{\alpha-\lambda (\alpha +1)}+Sv.
\end{align}
Multiplying by $\frac{v^T}{v^Tv}$ from the left on both sides of \eqref{eq37}, we obtain:
\begin{align}\label{eq38}
     (\lambda -1)\frac{v^TSv}{v^Tv}-\frac{\lambda^2 }{\alpha-\lambda (\alpha +1)}\frac{v^TA_1^TA_1v}{v^Tv}=-\frac{\lambda }{\alpha-\lambda (\alpha +1)}\frac{v^TA_2^TA_2v}{v^Tv}.
\end{align}
Define \begin{align*}
 s=\frac{v^TSv}{v^Tv},~~   p=\frac{v^TA_1^TA_1v}{v^Tv},~~ q=\frac{v^TA_2^TA_2v}{v^Tv}.
\end{align*}
Then $s>0$ as $S$ is SPD and $p,q\geq 0.$ Therefore, \eqref{eq38} becomes
\begin{align}\label{eq39}
    \lambda^2-\frac{2\alpha s+s+q}{(\alpha+1)s+p}\lambda +\frac{\alpha s}{(\alpha+1)s+p}=0.
\end{align}
By Lemma \ref{Lemma1}, the roots of the real quadratic equation \eqref{eq39} satisfy $|\lambda|<1$ if and only if 
\begin{align*}
    \left|\frac{\alpha s}{(\alpha+1)s+p}\right|<1 ~ ~\text{and}~~ \left|\frac{2\alpha s +s+q}{(\alpha+1)s+p}\right|<1+\frac{\alpha s}{(\alpha+1)s+p}.
\end{align*}
Now the inequality holds as $s>0$ and $p\geq 0.$ For the second inequality, we get:
\begin{align*}
    2\alpha s +s+q< (\alpha+1)s+p+\alpha s~~\text{and}~~ -(\alpha+1)s-p-\alpha s<  2\alpha s +s+q.
\end{align*}
The above holds as $p-q>0$ as $A^T\mathcal{J}A$ is SPD and $s>0,$ $p, q\geq 0.$ Hence, this completes the proof.
  \end{proof}
\section{Proposed BRS and RBRS Preconditioners}\label{SEC4}
In this section,  we have introduced a relaxed version of the BRS preconditioner
by omitting the $\alpha I$ term from the (1,1) and (3,3) block from the $\mathscr{P}_{\text{BRS}}$  as follows: 
\begin{align}
    \mathscr{P}_{\tt{RBRS}}=\bmatrix{I & A_1& \bf 0\\ -A_1^T&S &\0\\ \bf 0 & A_2 & I}.
\end{align}
Further, this section outlines the algorithmic procedures for implementing the BRS and RBRS preconditioners. Additionally, we analyze the eigenvalue distributions of the corresponding preconditioned matrices.

\subsection{ Algorithmic Implementation of the BRS Preconditioner}
In this subsection, we present the algorithmic implementation of $\mathscr{P}_{\text{BRS
     }}$, which incorporates within Krylov subspace methods such as  generalized minimum residual (GMRES) \cite{saad1986gmres}. At each iteration of the BRS iteration method, or when applying the BRS preconditioner within a Krylov subspace method, it is necessary to solve the generalized residual equation of the following form:
     \begin{align}
         \mathscr{P}_{\text{BRS}}\bm{w}=\bm{r},
     \end{align}
     where $\bm{w}=[w_1^T,w_2^T,w_3^T]^T$ and $\bm{r}=[r_1^T,r_2^T,r_3^T]^T.$
     The preconditioner $\mathscr{P}_{BRS
     }$ can be decomposed as:
     \begin{align}
    \mathscr{P}_{\text{BRS}}=\bmatrix{I & \bf 0& \bf 0\\ -\frac{1}{\alpha +1} A_1^T&I &\bf 0\\ \bf 0 &A_2 X^{-1} & I} \bmatrix{(\alpha +1)I & A_1& \bf 0\\ \bf 0 &X &\0\\ \bf 0 & \bf 0 & (\alpha+1)I},
\end{align}
where $X=S+\frac{1}{\alpha +1}A_1^TA_1.$
			
\begin{algorithm}
			\caption{Computation of  $\mathscr{P}_{\text{BRS}}\bm{w}=\bm{r}$}
                \label{alog2}
			\begin{algorithmic}[1]
				\State 
                \textbf{Input:} A positive real number $\alpha,$ $A=\bmatrix{A_1\\ A_2}\in \R^{m\times n},$ a SPD matrix $\mathcal{S}\in \R^{n\times n}$ and $\bm{r}=[r_1^T,r_2^T,r_3^T]^T.$
                \State \textbf{Output:} $\bm{w}=[w_1^T,w_2^T,w_3^T]^T.$
                \State \textbf{Steps:}
                \State  Solve $Xz_1=r_2$ for $z_1,$ where $X=S+\frac{1}{\alpha +1}A_1^TA_1$;
                \State  Solve $Xz_2=\frac{1}{\alpha +1}A_1^Tr_1$ for $z_2$;
                \State  Compute  $w_3=\frac{1}{\alpha+1}(r_3-A_2(z_1 +z_2))$:
                \State  Solve $Xw_2=r_2+\frac{1}{\alpha +1}A_1^Tr_1$ for $w_2;$
                \State  Compute $w_1=\frac{1}{\alpha + 1}(r_1-A_1w_2).$ 
			
   \end{algorithmic}
		\end{algorithm}


\begin{algorithm}
			\caption{Computation of  $\mathscr{P}_{R BRS}\bm{w}=\bm{r}$}
                \label{alog3}
			\begin{algorithmic}[1]
				\State 
                \textbf{Input:} A positive real number $\alpha,$ $A=\bmatrix{A_1\\ A_2}\in \R^{m\times n},$ a SPD matrix $\mathcal{S}\in \R^{n\times n}$ and $\bm{r}=[r_1^T,r_2^T,r_3^T]^T.$
                \State \textbf{Output:} $\bm{w}=[w_1^T,w_2^T,w_3^T]^T.$
                \State \textbf{Steps:}
                \State  Solve $(S+A_1^TA_1)z_1=r_2$  for $z_1;$
                \State  Solve $(S+A_1^TA_1)z_2=A_1^Tr_1$ for $z_2$;
                \State  Compute  $w_3=r_3-A_2(z_1+ z_2)$:
                \State  Solve $(S+A_1^TA_1)w_2=r_2+ A_1^Tr_1$ for $w_2;$
                \State  Compute $w_1=r_1-A_1w_2.$ 
			
   \end{algorithmic}
		\end{algorithm}

The above factorization yields an efficient algorithm for solving the generalized residual equation (2.6), which we present as Algorithm \ref{alog2}.

On the other hand, we have the following decomposition:
        \begin{align}
\mathscr{P}_{\text{RBRS}}=\bmatrix{I & \bf 0& \bf 0\\ -A_1^T&I &\bf 0\\ \bf 0 &A_2 (S+A_1^TA_1)^{-1} & I} \bmatrix{I & A_1& \bf 0\\ \bf 0 &S+A_1^TA_1 &\0\\ \bf 0 & \bf 0 & I}.
\end{align}
Using the above decomposition, we obtain Algorithm \ref{alog3}, which must be performed at each step of the preconditioned GMRES method.

\begin{remark}
   In Algorithm~\ref{alog2}, three linear subsystems are solved with the same coefficient matrix
$X=S+\frac{1}{\alpha +1}A_1^TA_1$. Since $X$ is SPD, these systems can be efficiently solved using either the Cholesky factorization or a preconditioned conjugate gradient (PCG) method. It is worth noting that the Cholesky factorization of $X$ needs to be performed only once, thereby reducing the overall computational cost. On the other hand, Algorithm~\ref{alog3} involves solving three linear subsystems with the coefficient matrix $S+A_1^TA_1$ instead of $ X$. Since $S+A_1^TA_1$ is also SPD, we can follow a similar method for the efficient implementation of Algorithm~\ref{alog3}.
\end{remark}
\subsection{Spectral Analysis of the BRS Preconditioned Matrix $\mathscr{P}_{\text{BRS}}^{-1}\mathcal{K}$} 
In this subsection, we discuss the spectral distribution of the both preconditioned matrices.

\begin{theorem}\label{th1:Eigdist}
    Let $A_1 \in \mathbb{R}^{n \times n}$ be a full column rank matrix and $S$ be a SPD matrix, and $\alpha>0.$
Suppose $\mu$ is an eigenvalue of the preconditioned matrix $\mathscr{P}^{-1}_{\text{BRS}} \mathcal{K}$. 
Then, 
\[
|\mu - 1| < 1,
\]
i.e., the spectrum of $\mathscr{P}^{-1}_{\text{BRS}} \mathcal{K}$ is entirely contained within a circle centered at $(1,0)$ 
with radius strictly less than $1$.
\end{theorem}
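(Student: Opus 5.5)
Since $\mathcal{K}=\mathscr{P}_{\text{BRS}}-\mathcal{Q}_{\text{BRS}}$, we have
\[
\mathscr{P}^{-1}_{\text{BRS}}\mathcal{K}=I-\mathscr{P}^{-1}_{\text{BRS}}\mathcal{Q}_{\text{BRS}}=I-\mathcal{G}.
\]
Hence every eigenvalue $\mu$ of the preconditioned matrix has the form $\mu=1-\lambda$, where $\lambda$ runs over the eigenvalues of the BRS iteration matrix $\mathcal{G}$ (with the same eigenvectors). The claim $|\mu-1|<1$ is therefore equivalent to $|\lambda|<1$ for every eigenvalue of $\mathcal{G}$, that is, to $\rho(\mathcal{G})<1$. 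My plan is to reduce the statement to the convergence theorem of Section~\ref{SEC3} and check that its hypotheses are met.

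\textbf{Step 1: the splitting identity.} I will write down the identity above and note that the spectral map $\lambda\mapsto 1-\lambda$ carries the open unit disk centred at $0$ onto the open unit disk centred at $(1,0)$.

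\textbf{Step 2: bound on $\lambda$.} I will invoke the preceding theorem, which gives $\rho(\mathcal{G})<1$ for every $\alpha>0$ and every SPD $S$, under the standing assumption that $A^T\mathcal{J}A$ is SPD and $A_1$ has full column rank. The statement writes $A_1\in\R^{n\times n}$, but I read this as $A_1\in\R^{p\times n}$ with full column rank, as in Section~\ref{sec2}. For completeness I will recall the two cases of that argument.
\begin{itemize}
\item If the $v$-component of the eigenvector vanishes, then $\lambda=\alpha/(\alpha+1)$, which lies in $(0,1)$.
\item Otherwise $\lambda$ is a root of the real quadratic \eqref{eq39}, whose coefficients are built from the Rayleigh quotients $s,p,q$. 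Lemma~\ref{Lemma1} places both roots inside the unit circle, provided $s>0$ and $p>q$.
\end{itemize}
Since $\rho(\mathcal{G})<1$ and the spectrum is finite, $\max|\mu-1|=\rho(\mathcal{G})<1$. This gives the strict radius claimed in the statement.

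\textbf{Expected obstacle.} The main point to check is the inequality $p-q>0$ used when applying Lemma~\ref{Lemma1}. For $v\neq\0$,
\[
p-q=\frac{v^TA^T\mathcal{J}Av}{v^Tv},
\]
which is positive exactly because $A^T\mathcal{J}A$ is SPD. So the hypothesis of the ILS setting, not merely the full column rank of $A_1$, is what is needed, and I would state that explicitly.

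A second point needs care. For complex eigenvectors the Rayleigh quotients must be taken with conjugate transposes $v^*$ instead of $v^T$; they remain real, so \eqref{eq39} is still a real quadratic and Lemma~\ref{Lemma1} applies. I would add this remark to make the reduction rigorous.
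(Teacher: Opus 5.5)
Your proposal is correct and matches the paper's intended argument: the paper states this theorem without writing out a separate proof, and it follows from $\mathscr{P}^{-1}_{\text{BRS}}\mathcal{K}=I-\mathcal{G}$ together with the convergence result $\rho(\mathcal{G})<1$ of Section~\ref{SEC3}. Your two added remarks are valid refinements of that argument, namely that $p-q>0$ really requires the standing assumption that $A^T\mathcal{J}A$ is SPD (full column rank of $A_1$ alone does not suffice), and that the Rayleigh quotients must be formed with $v^*$ for complex eigenvectors so that the quadratic has real coefficients.
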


To further illustrate the effectiveness of our preconditioner \(\mathscr{P}\), we establish explicit and tight spectral bounds for the preconditioned matrix. 

Let $\lambda$ be an eigenvalue of the preconditioned matrix $\mathscr{P}_{\text{BRS}}^{-1}\mathcal{K}.$ Then, we have $\mathcal{K}\bm{z}=\lambda \mathscr{P}_{\text{BRS}}\bm{z},$ where $\bm{z}$ is the corresponding eigenvector. Then,  we obtain the following linear system of equations:
\begin{align}\label{eq:evd1}
       &  u+ A_1v=(\alpha +1)\lambda u+ \lambda A_1v,\\ \label{eq:evd2}
         &- A_1^Tu+A_2^T w=-\lambda A_1^Tu+\lambda Sv,\\ \label{eq:evd3}
         &A_2v+ w=\lambda A_2v+\lambda (\alpha+1)w.
     \end{align}

     \begin{theorem}
        Let $\lambda$ be an eigenvalue of the preconditioned matrix $\mathscr{P}_{\text{BRS}}^{-1}\mathcal{K}.$ Then $\lambda \neq 1$ and $\lambda =\frac{1}{\alpha +1}$ is an eigenvalues of $\mathscr{P}_{\text{BRS}}^{-1}\mathcal{K}$ with corresponding eigenvector $(\0,\0,w^T)^T,$ where $w\in \mathrm{Null}(A_2^T).$ The of the eigenvalues satisfies the quadratic equation: \[\bigl((\alpha+1)s + p\bigr)\lambda^2
+ \bigl(q - 2p - s\bigr)\lambda
+ (p - q) = 0,\]
where $s= \frac{v^TSv}{v^Tv}, $ $p= \frac{v^TA_1 v}{v^Tv}$ and $q= \frac{v^TA_2 v}{v^Tv}$.
     \end{theorem}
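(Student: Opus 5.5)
The plan is to work directly with the eigen-equations \eqref{eq:evd1}--\eqref{eq:evd3} for $\mathcal{K}\bm{z}=\lambda\mathscr{P}_{\text{BRS}}\bm{z}$, $\bm{z}=[u^T,v^T,w^T]^T\neq\0$, in the same way as in the convergence proof of Section~\ref{SEC3}. First rearrange the three equations as
\begin{align*}
\bigl(1-(\alpha+1)\lambda\bigr)u=(\lambda-1)A_1v,\quad (\lambda-1)A_1^Tu+A_2^Tw=\lambda Sv,\quad \bigl(1-(\alpha+1)\lambda\bigr)w=(\lambda-1)A_2v .
\end{align*}

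To exclude $\lambda=1$, substitute it into these equations. The first and third give $-\alpha u=\0$ and $-\alpha w=\0$, so $u=\0$ and $w=\0$ because $\alpha>0$. The second then reduces to $Sv=\0$, and since $S$ is SPD this gives $v=\0$. This contradicts $\bm{z}\neq\0$, so $\lambda\neq1$.

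For the eigenvalue $\lambda=\frac{1}{\alpha+1}$, take $\bm{z}=(\0,\0,w^T)^T$ with $\0\neq w\in\mathrm{Null}(A_2^T)$. The first equation holds trivially. The third holds because $1-(\alpha+1)\lambda=0$. The second reduces to $A_2^Tw=\0$. Conversely, if $v=\0$ then the first and third equations force $(1-(\alpha+1)\lambda)u=\0$ and $(1-(\alpha+1)\lambda)w=\0$. So either $\lambda=\frac{1}{\alpha+1}$, or $u=w=\0$, which is impossible. Hence every eigenvalue other than $\frac1{\alpha+1}$ has an eigenvector with $v\neq\0$.

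For the remaining eigenvalues, assume $v\neq\0$ and $\lambda\neq\frac1{\alpha+1}$. Solve for
\[
u=\frac{(\lambda-1)A_1v}{1-(\alpha+1)\lambda},\qquad w=\frac{(\lambda-1)A_2v}{1-(\alpha+1)\lambda},
\]
and substitute into the second equation. After multiplying by $1-(\alpha+1)\lambda$ this gives
\[
(\lambda-1)^2A_1^TA_1v+(\lambda-1)A_2^TA_2v=\lambda\bigl(1-(\alpha+1)\lambda\bigr)Sv .
\]
Next multiply on the left by $v^T/(v^Tv)$. Here $p$ and $q$ must be read as the Rayleigh quotients of $A_1^TA_1$ and $A_2^TA_2$, exactly as in \eqref{eq38}; the statement's $v^TA_1v$ and $v^TA_2v$ are typographical slips. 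The result is
\[
(\lambda-1)^2p+(\lambda-1)q=\lambda s-(\alpha+1)s\lambda^2 .
\]
Expanding and collecting powers of $\lambda$ yields $\bigl((\alpha+1)s+p\bigr)\lambda^2+(q-2p-s)\lambda+(p-q)=0$, which is the claimed quadratic.

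The computation itself is routine. The only real care needed is the case bookkeeping: checking that $v=\0$ leads only to $\lambda=\frac1{\alpha+1}$, and that dividing by $1-(\alpha+1)\lambda$ is legitimate everywhere it is used.
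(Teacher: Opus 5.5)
Your proposal is correct and follows essentially the same route as the paper: exclude $\lambda=1$ using the first and third equations, isolate $\lambda=\frac{1}{\alpha+1}$ through the case $v=0$, and for $v\neq 0$ eliminate $u,w$ and take Rayleigh quotients. You are right that $p,q$ must be read as the Rayleigh quotients of $A_1^TA_1$ and $A_2^TA_2$, and your direct check of the eigenpair $(0,0,w^T)^T$ is slightly tidier than the paper's. One small point, which the paper shares: since $\lambda$ and $v$ may be complex, form the Rayleigh quotients with $v^*$ rather than $v^T$, so that $v^*v>0$ and $s,p,q$ are real.
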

     \begin{proof}
         Let $\lambda=1.$ Then, from \eqref{eq:evd1} $\alpha u=\0,$ which gives $u=\0.$ From \eqref{eq:evd2} and \eqref{eq:evd3}, we obtain $v=0$ and $w=0,$ therefore $\bm{z}=\0,$ which is a contradiction to the fact that $\bm{z}$ is an eigenvector. Hence, $\lambda\neq 1.$

\noindent Next, consider $v=\0.$ Then, from \eqref{eq:evd1} and \eqref{eq:evd3}, we have $u=(\alpha+1) \lambda u$ and $w=(\alpha+1) \lambda w,$ respectively. If $u=\0,$ \eqref{eq:evd2} gives $A_2^Tw=\0,$ which implies $w\in \text{Null}(A_2^T).$ Hence, we obtain $\lambda=\frac{1}{\alpha+1}$ is an eigenvalue of the preconditioned matrix with corresponding eigenvector $(\0,\0,w^T)^T,$ where $w\in \text{Null}(A_2^T).$

   Consider $\v\neq \0$ and $\lambda\neq \frac{1}{\alpha+1}.$      From \eqref{eq:evd1} and \eqref{eq:evd3}, we have
\[
\bigl(1-(\alpha+1)\lambda\bigr)u + (1-\lambda)A_1 v = 0 
\]
and \[\bigl(1-(\alpha+1)\lambda\bigr)w + (1-\lambda)A_2 v = 0.\]
Since $1-(\alpha+1)\lambda \neq 0$ (i.e., $\lambda \neq \tfrac{1}{\alpha+1}$), it follows that
\[
u = \frac{\lambda-1}{\,1-(\alpha+1)\lambda\,}\, A_1 v~~\text{and}~~w = \frac{\lambda-1}{\,1-(\alpha+1)\lambda\,}\, A_2 v.
\]
Substituting these values in \eqref{eq:evd2}, we obtain:
\begin{align}
    \lambda Sv=\frac{(\lambda-1)^2}{\,1-(\alpha+1)\lambda\,}\, A_1 v+\frac{\lambda-1}{\,1-(\alpha+1)\lambda\,}\, A_2 v.
\end{align}
The above can be rewritten as:
\begin{align}
    \lambda s=\frac{(\lambda-1)^2}{\,1-(\alpha+1)\lambda\,}\, p+\frac{\lambda-1}{\,1-(\alpha+1)\lambda\,}\, q.
\end{align}
By simplifying, we obtain:
\begin{equation}
\bigl((\alpha+1)s + p\bigr)\lambda^2
+ \bigl(q - 2p - s\bigr)\lambda
+ (p - q) = 0.
\end{equation}
This completes the proof.
     \end{proof}

The preconditioner $\mathscr{P}_{\text{RBRS}}$ can be decompose as follows:
 \begin{align}
\mathscr{P}_{\text{RBRS}}=\bmatrix{I & \bf 0& \bf 0\\ -A_1^T&I &\bf 0\\ \bf 0 &A_2 (S+A_1^TA_1)^{-1} & I} \bmatrix{I & A_1& \bf 0\\ \bf 0 &S+A_1^TA_1 &\0\\ \bf 0 & \bf 0 & I}.
\end{align}

To further illustrate the effectiveness of our preconditioner \(\mathscr{P}_{\text{RBRS}}\), we investigate the eigenvalue properties of the RBRS  preconditioned matrix. 
\begin{theorem}
    Let $A_1 \in \mathbb{R}^{n \times n}$ be a full column rank matrix, $S$ be a SPD matrix, and $\alpha>0.$
Then, all the eigenvalues of the preconditioned matrix $\mathscr{P}_{\text{RBRS}}^{-1} \mathcal{K}$ are real and belong to the interval $(0,1]$. More precisely, it has $1$ as its eigenvalues with multiplicity at most $p,$ and the remaining eigenvalues are the same as those of the symmetric matrix  
\begin{align*}
\widehat{H}=
\bmatrix{
I-S^{1/2}Y^{-1}S^{1/2} & S^{1/2}Y^{-1}A_2^T \\
A_2Y^{-1}S^{1/2} & I-A_2Y^{-1}A_2^T
}
\end{align*}
that satisfy
\[
0< \lambda_i(\widehat H)\le 1.
\]
Hence,
\[
0<\lambda_i\!\left(\mathscr{P}_{\mathrm{RBRS}}^{-1}\mathcal K\right)\le 1.
\]
%
\end{theorem}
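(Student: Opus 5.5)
The plan is to write $\mathcal{K}$ as a perturbation of $\mathscr{P}_{\text{RBRS}}$ and observe that the perturbation has a very sparse block structure. Comparing the blocks, $\mathcal{K}=\mathscr{P}_{\text{RBRS}}-\mathcal{R}$ with
\[
\mathcal{R}=\bmatrix{\0 & \0 & \0\\ \0 & S & -A_2^T\\ \0 & \0 & \0},
\qquad\text{so}\qquad
\mathscr{P}_{\text{RBRS}}^{-1}\mathcal{K}=I-\mathscr{P}_{\text{RBRS}}^{-1}\mathcal{R}.
\]
Since the first block column of $\mathcal{R}$ vanishes, $\mathscr{P}_{\text{RBRS}}^{-1}\mathcal{R}$ has a zero first block column. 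Hence $I-\mathscr{P}_{\text{RBRS}}^{-1}\mathcal{R}$ is block lower triangular with respect to the splitting $(p)\,|\,(n+q)$, with leading block $I_p$. This gives the eigenvalue $1$ coming from the first $p$ coordinates. The remaining eigenvalues are those of the trailing $(n+q)\times(n+q)$ block, namely $I$ minus the $(2{:}3,2{:}3)$ part of $\mathscr{P}_{\text{RBRS}}^{-1}\mathcal{R}$.

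To compute this block, I would use the factorization of $\mathscr{P}_{\text{RBRS}}$, i.e.\ Algorithm~\ref{alog3}, with right-hand side $\bm r=(\0,r_2,\0)$. With $Y:=S+A_1^TA_1$ (SPD), one gets $w_2=Y^{-1}r_2$ and $w_3=-A_2Y^{-1}r_2$. Applying this to the columns of $\mathcal{R}$, the trailing block of $\mathscr{P}_{\text{RBRS}}^{-1}\mathcal{K}$ is
\[
H=\bmatrix{I-Y^{-1}S & Y^{-1}A_2^T\\ A_2Y^{-1}S & I-A_2Y^{-1}A_2^T}.
\]
A similarity with $\diag(S^{1/2},I)$ turns $H$ into exactly $\widehat H$, which shows that $\widehat H$ is symmetric. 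Therefore all these eigenvalues are real.

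For the bounds, I would set $B=\bmatrix{S^{1/2}\\ -A_2}\in\R^{(n+q)\times n}$ and check that $\widehat H=I-BY^{-1}B^T$.

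\textbf{Upper bound.} Since $BY^{-1}B^T$ is positive semidefinite, $\lambda_i(\widehat H)\le 1$.

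\textbf{Lower bound.} The nonzero eigenvalues of $BY^{-1}B^T$ coincide with those of $Y^{-1/2}B^TBY^{-1/2}$, and $B^TB=S+A_2^TA_2$. So $\lambda_{\max}(BY^{-1}B^T)<1$ is equivalent to $S+A_2^TA_2\prec S+A_1^TA_1$, i.e.\ to $A_1^TA_1-A_2^TA_2=A^T\mathcal{J}A$ being SPD. This is precisely the standing assumption, and it yields $\lambda_i(\widehat H)>0$. Combining with the eigenvalue $1$ from the first block gives all eigenvalues of $\mathscr{P}_{\text{RBRS}}^{-1}\mathcal{K}$ in $(0,1]$.

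The step I expect to need the most care is the bookkeeping of the multiplicity of the eigenvalue $1$. The triangular structure guarantees at least $p$ copies from the first block. Further copies come from $\mathrm{Null}(B^T)=\{(y,z): S^{1/2}y=A_2^Tz\}$, which has dimension $q$ since $S^{1/2}$ is invertible. So the precise multiplicity claimed in the statement should be reconciled by counting these contributions explicitly. If necessary, I would phrase the result as: $1$ arising from the first block with multiplicity $p$, and the rest being the spectrum of $\widehat H$, which may itself contain further $1$'s. The inequalities $0<\lambda\le1$ are unaffected by this.
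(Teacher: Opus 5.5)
Your proposal is correct and follows essentially the same route as the paper. Both arguments reduce to the block-triangular structure with trailing block $H$, use the similarity by $\diag(S^{1/2},I)$ to obtain the symmetric $\widehat H$, write $\widehat H$ as $I$ minus a Gram-type matrix, and compare $S+A_2^TA_2$ with $Y$ through $A^T\mathcal{J}A\succ 0$. The differences are cosmetic:
\begin{itemize}
\item Writing $\mathcal{K}=\mathscr{P}_{\text{RBRS}}-\mathcal{R}$ means you need only the second block column of $\mathscr{P}_{\text{RBRS}}^{-1}$, not the full inverse.
\item Your factor $B=[S^{1/2};\,-A_2]$ gives $\widehat H=I-BY^{-1}B^T$ exactly. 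The paper's $B$ reproduces $\widehat H$ only up to the sign of the off-diagonal blocks, which is harmless since it is a similarity by $\diag(I,-I)$.
\item The matrix is block \emph{upper} triangular, not lower.
\end{itemize}

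On the multiplicity, you do not need to hedge, because your own count settles it. Since $\mathscr{P}_{\text{RBRS}}^{-1}\mathcal{K}-I=-\mathscr{P}_{\text{RBRS}}^{-1}\mathcal{R}$, the eigenspace for $1$ is $\ker\mathcal{R}=\{(u,\,S^{-1}A_2^Tw,\,w)\}$, which has dimension $p+q$. The other $n$ eigenvalues are $1-\mu_i$ with $\mu_i\in\sigma\bigl(Y^{-1/2}(S+A_2^TA_2)Y^{-1/2}\bigr)\subset(0,1)$, so they lie strictly below $1$. Hence the eigenvalue $1$ has multiplicity exactly $p+q$. The ``at most $p$'' in the statement is therefore false whenever $q\ge 1$; the paper's proof itself only establishes ``at least $p$''. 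Your suggested rephrasing is the right correction.
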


\begin{proof}
By direct computation, the preconditioned matrix $\mathscr{P}_{\text{RBRS}}^{-1}\mathcal{K}$ has the following form:
\begin{align}
\mathscr{P}_{\text{RBRS}}^{-1}\mathcal{K}
&=
\bmatrix{
I-A_1Y^{-1}A_1^T & -A_1Y^{-1} & \0\\
Y^{-1}A_1^T & Y^{-1} & \0\\
-A_2Y^{-1}A_1^T & -A_2Y^{-1} & I
}
\bmatrix{
I & A_1 &\0\\
-A_1^T & \0 &A_2^T \\
\0 & A_2 & I
}\\
&=
\bmatrix{
I &(I-A_1Y^{-1}A_1^T)A_1 & -A_1Y^{-1}A_2^T\\
\0 & Y^{-1}A_1^TA_1 & Y^{-1}A_2^T \\
\0 & A_2(I-Y^{-1}A_1^TA_1)& I-A_2Y^{-1}A_2^T
},
\end{align}
where $Y=S+A_1^TA_1.$

Therefore, the preconditioned matrix $\mathscr{P}_{\text{RBRS}}^{-1}\mathcal{K}$ has $1$ as its eigenvalue with multiplicity at least $p$, and the rest of the eigenvalues are the eigenvalues of the following matrix:
\begin{align*}
H=
\bmatrix{
Y^{-1}A_1^TA_1 & Y^{-1}A_2^T \\
A_2(I-Y^{-1}A_1^TA_1)& I-A_2Y^{-1}A_2^T
}.
\end{align*}
The above matrix can further be expressed as:
\begin{align*}
H=
\bmatrix{
I-Y^{-1}S & Y^{-1}A_2^T \\
A_2Y^{-1}S& I-A_2Y^{-1}A_2^T
},
\end{align*}
as $Y^{-1}S+Y^{-1}A_1^TA_1=I.$

By taking the similarity transformation, we obtain:
\begin{align*}
\widehat{H}
&=
\bmatrix{
S^{1/2} & \0\\
\0 & I
}
\bmatrix{
I-Y^{-1}S & Y^{-1}A_2^T \\
A_2Y^{-1}S& I-A_2Y^{-1}A_2^T
}
\bmatrix{
S^{-1/2} & \0\\
\0 & I
}\\
&=
\bmatrix{
I-S^{1/2}Y^{-1}S^{1/2} & S^{1/2}Y^{-1}A_2^T \\
A_2Y^{-1}S^{1/2}& I-A_2Y^{-1}A_2^T
}.
\end{align*}
Observe that the matrix $\widehat{H}$ is symmetric, and thus has all eigenvalues real. Since $H$ is similar to $\widehat{H}$, all eigenvalues of $H$ are also real.

\noindent Next, define
\[
B=
\bmatrix{
S^{1/2}Y^{-1/2}\\
A_2Y^{-1/2}
}.
\]
Then,
\[
\widehat H = I-BB^T,
\]
since
\[
BB^T=
\bmatrix{
S^{1/2}Y^{-1}S^{1/2} & S^{1/2}Y^{-1}A_2^T\\
A_2Y^{-1}S^{1/2} & A_2Y^{-1}A_2^T
}.
\]
Hence,
\[
\lambda_i(\widehat H)=1-\lambda_i(BB^T).
\]
Moreover, the nonzero eigenvalues of $BB^T$ coincide with those of
\[
B^TB
=
Y^{-1/2}(S+A_2^TA_2)Y^{-1/2}.
\]
Therefore,
\[
1-\lambda_{\max}\!\left(
Y^{-1/2}(S+A_2^TA_2)Y^{-1/2}
\right)
\le
\lambda_i(\widehat H)
\le 1.
\]
Since $A^TJA$ is SPD, we have
\[
A_1^TA_1-A_2^TA_2 \succ 0,
\]
which implies
\[
S+A_2^TA_2 \preceq S+A_1^TA_1=Y.
\]
Therefore,
\[
Y^{-1/2}(S+A_2^TA_2)Y^{-1/2}\prec I,
\]
and hence
\[
0\le
\lambda_i\!\left(
Y^{-1/2}(S+A_2^TA_2)Y^{-1/2}
\right)
< 1.
\]
Consequently,
\(
0< \lambda_i(\widehat H)\le 1,
\)
since
\[
\lambda_i(\widehat H)
=
1-
\lambda_i\!\left(
Y^{-1/2}(S+A_2^TA_2)Y^{-1/2}
\right).
\]
This completes the proof.
\end{proof}
\section{Numerical Experiments}\label{SEC5}

To evaluate the effectiveness and robustness of the proposed BRS and RBRS preconditioners for solving the ILS problem, we conduct several numerical experiments within the Krylov subspace iterative framework. Specifically, we compare the proposed BRS and RBRS preconditioned GMRES methods with the standard GMRES method \cite{saad1986gmres} without preconditioning and several preconditioned GMRES methods with
two block splitting (BS) preconditioners \cite{BSP2025OILS} given by:
\begin{align*}
    BS_1= \bmatrix{ I & \bf 0 &\bf 0\\ \bf 0& P &\bf 0\\ \bf 0 &\bf 0 &I} ~\text{and}~~ BS_2= \bmatrix{ I & \bf 0 &\bf 0\\ \bf 0& P &A_2^T\\ \bf 0 &\bf 0 &I},
\end{align*}
where $P=A_1^TA_1.$

For all iterative methods, the initial approximation is chosen as
\(
\bm{u}^{(0)} =
\bmatrix{
\bm{v}^{T},\,
\bm{x}_0^{T} ,\,
\bm{p}^{T}
},
\)
where \(\bm{x}_0=\bf{0}\) and the initial residual components are defined by
\[
\bmatrix{
\bm{v}^{T} \\
\bm{p}^{T}
}
= \bm{b}-A\bm{x}_0 .
\]
%
The iteration process is terminated when 
\begin{equation*}
    {\text{RES}} := 
    \frac{\| \mathcal{K}\bm{u}^{(k+1)} - \bm{b} \|_2}{\| \bm{b} \|_2} < 10^{-6},
\end{equation*}
or when the maximum number of iterations exceeds $5000$.  We compute the relative error in the approximate solution using the following:
\[ \text{ERR}=\frac{\|\bm{x}_{\mathrm{appx}}- \bm{x}_{\mathrm{exact}}\|_2}{\|\bm{x}_{\mathrm{exact}}\|_2}.\]
The component $\bm{x}_{\mathrm{appx}}$ extracted from $\bm{u}$ represents the current approximate solution to the ILS problem \eqref{ILS1}. 



All numerical experiments are performed in \textsc{MATLAB} R2024a 
on a \textsc{Windows 11} operating system, 
using an \textsc{Intel(R) Core(TM) i7-9700T CPU @ 2.00 GHz, 1992 Mhz, 8 Core(s)} 
with \textsc{16 GB RAM}.

\begin{example}\label{exam1}
Let $\epsilon = 10^{-4}$, and define
\[
\widetilde{X} = Y 
\bmatrix{
D \\[4pt]
\bf 0
}
Z^{T} \in \mathbb{R}^{p \times n},
\]
where $Y, Z$ are given orthogonal matrices and 
\[
D = \mathrm{diag}(1, 1/2, \ldots, 1/n).
\]
Further, let
\[
X = \widetilde{X} + \epsilon E\in \R^{p\times n}, 
\qquad 
d = \widetilde{X}\mathbf{1}_n + \epsilon f,
\]
where $\mathbf{1}_n$ denotes an $n \times 1$ column vector of all ones, and $E$, $f$ are given error matrix and vector generated using the MATLAB command ``{\tt rand}($\cdot$)". 
If the matrix $X^{T}X - \sigma_{n+1}^{2}I_n$ is positive definite, the solution of the total least-squares problem associated with $X, d$ is given by
\[
\bm{x}_{\mathrm{TLS}} = (X^{T}X - \sigma_{n+1}^{2}I_n)^{-1} X^{T} d,
\]
where $\sigma_{n+1}$ is the smallest singular value of $\bmatrix{X & d}$. 

\begin{table}[htbp]
\centering
\caption{Numerical results for different preconditioned GMRES methods for Example \ref{exam1}}
\begin{tabular}{ccccccc}
\toprule
$(p, n)$ & &GMRES & $\mathrm{BS}_1$ & $\mathrm{BS}_2$ & BRS & RBRS\\
\midrule

\multirow{4}{*}{$(64, 32)$}
 & IT  & 48 & 2 & 2 & 2 & 2\\
 & CPU & 0.0086 & 0.0071 & 0.0044 & 0.0014 & 0.0147\\
 & RES & 2.8595E-07 & 4.8300E-12 & 2.5003E-10 & 1.2642E-10 & 1.0732E-10\\
 & ERR & 1.5048E-07 & 6.9112E-10 & 1.4183E-14 & 6.4811E-11 & 2.7610E-12\\

\midrule

\multirow{4}{*}{$(128, 64)$}
 & IT  & 64 & 2 & 2 & 2 & 2\\
 & CPU & 0.0058 & 0.0095 & 0.0098 & 0.0042 & 0.0066\\
 & RES & 9.3649E-07 & 1.4520E-10 & 4.7491E-09 & 3.2035E-09 & 5.4468E-10\\
 & ERR & 2.1166E-06 & 1.2303E-08 & 2.0349E-13 & 4.9118E-10 & 2.5359E-11\\

\midrule

\multirow{4}{*}{$(256, 128)$}
 & IT  & 88 & 2 & 2 & 2 & 2\\
 & CPU & 0.0288 & 0.0314 & 0.0376 & 0.0042 & 0.0036\\
 & RES & 7.5591E-07 & 2.4410E-09 & 3.5777E-08 & 3.2035E-09 & 3.1789E-09\\
 & ERR & 7.0259E-06 & 1.7371E-07 & 6.4496E-12 & 4.9118E-10 & 2.9646E-10\\

\midrule

\multirow{4}{*}{$(1024, 512)$}
 & IT  & 146 & 4 & 3 & 2 & 2\\
 & CPU & 0.3550 & 1.1359 & 0.9255 & 0.0388 & 0.0347\\
 & RES & 9.1209E-07 & 8.2644E-11 & 3.1185E-10 & 1.1445E-07 & 1.1414E-07\\
 & ERR & 1.3303E-04 & 2.2243E-09 & 5.2883E-12 & 3.9236E-08 & 3.8228E-08\\

\midrule

\multirow{4}{*}{$(2048, 1024)$}
 & IT  & 180 & 4 & 3 & 2 & 2\\
 & CPU & 2.6536 & 8.2573 & 6.8436 & 0.1591 & 0.1559\\
 & RES & 9.2476E-07 & 2.5273E-08 & 4.9380E-08 & 7.0466E-07 & 7.0390E-07\\
 & ERR & 5.4275E-04 & 6.3509E-07 & 2.0309E-09 & 3.7968E-07 & 3.7775E-07\\

\midrule

\multirow{4}{*}{$(4096, 2048)$}
 & IT  & 210 & 6 & 4 & 3 & 3\\
 & CPU & 11.7411 & 82.7155 & 52.0565 & 1.1370 & 1.2117\\
 & RES & 9.5833E-07 & 8.7196E-08 & 8.5439E-08 & 5.8313E-08 & 5.8284E-08\\
 & ERR & 2.1779E-03 & 1.7271E-06 & 8.5051E-08 & 4.4007E-08 & 4.3935E-08\\

\bottomrule
\end{tabular}
\label{tab1:example1}
\end{table}
It is equivalent to solving the ILS problem with $q=n$ and 
\[
A = 
\bmatrix{
X \\[4pt]
\sigma_{n+1} I_n
}\in \R^{(p+n)\times n},
\qquad
b = 
\bmatrix{
d \\[4pt]
\bf 0
},
\qquad
\mathcal{J} = \mathrm{diag}(I_p, -I_n).
\]

Table~\ref{tab1:example1} presents the numerical results for the proposed BRS and RBRS   preconditioned GMRES methods, together with the unpreconditioned GMRES method and the $\mathrm{BS}_1$ and $\mathrm{BS}_2$ preconditioned GMRES methods. For BRS and RBRS preconditioners, we choose $S = 0.1 I_n$ and $\alpha=1e-05.$ The performance of each method is evaluated in terms of the number of iterations (IT), CPU time (in seconds), the relative residual (RES), and the relative error (ERR).

The numerical results demonstrate the effectiveness of the proposed 
$\mathrm{BRS}$ and $\mathrm{RBRS}$ preconditioners. Notice that the proposed BRS and RBRS preconditioned methods outperform all other methods, in terms of both IT and CPU. Compared with the unpreconditioned GMRES method, the proposed preconditioners substantially 
reduce the number of iterations and, in most cases, the computational time. 
In particular, $\mathrm{BRS}$ requires only $2$--$3$ iterations for all 
tested problem sizes, whereas GMRES requires between $48$ and $210$ 
iterations. Moreover, $\mathrm{BRS}$ produces small residuals and errors 
while maintaining a relatively low computational cost. For the largest 
problem of size $6132\times2048$, for example, $\mathrm{BRS}$ reduces the 
iteration count of GMRES from $210$ to only $3$ and the CPU time from $11.7411$ to 
$1.1370$ seconds, while for RBRS it is $1.2117$ seconds.

\begin{figure}[h!]\label{eigen:ex1}
    \centering
    \begin{minipage}{0.4\textwidth}
        \centering
        \includegraphics[width=\linewidth]{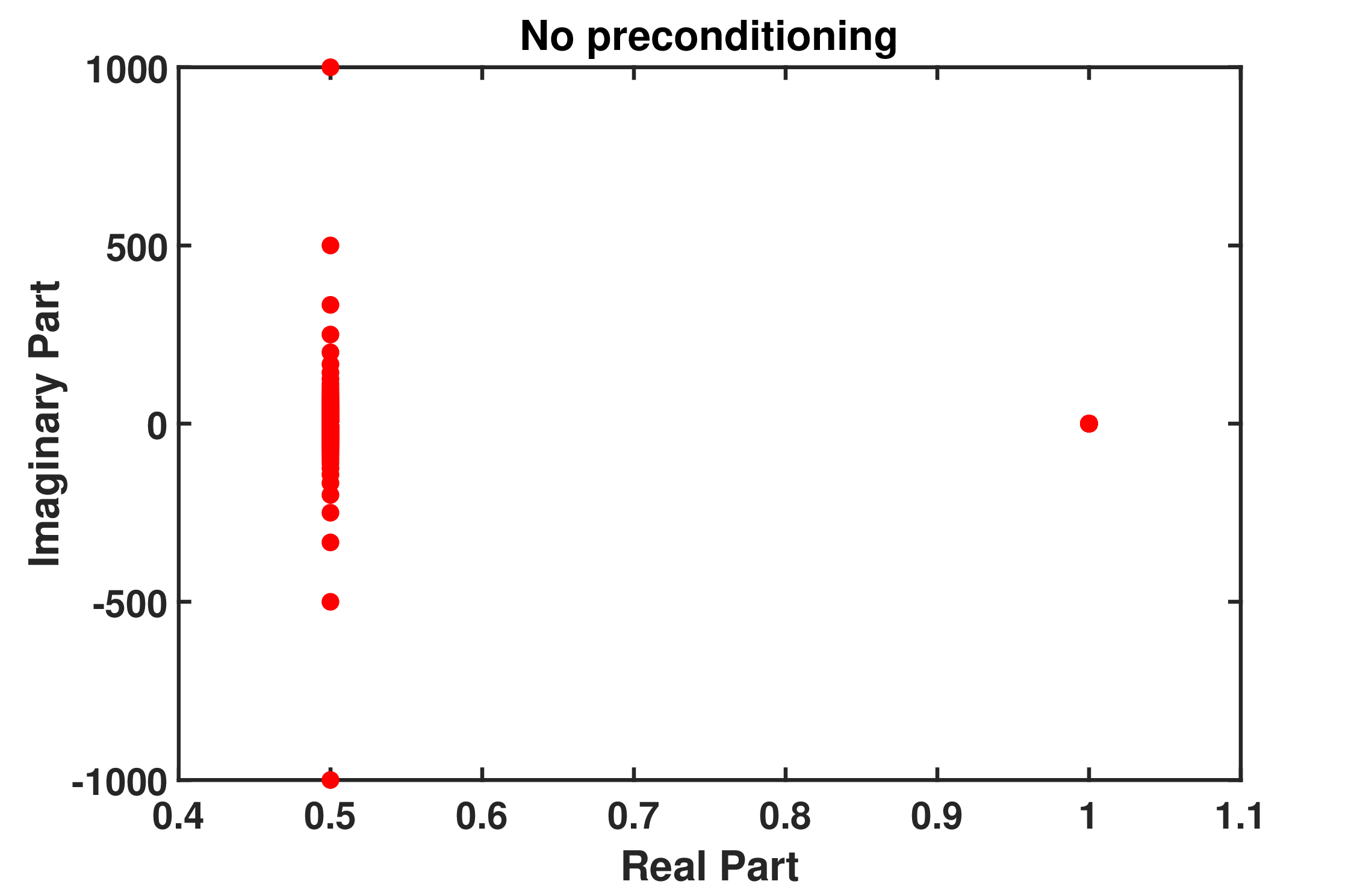}
    \end{minipage}
    \begin{minipage}{0.37\textwidth}
        \centering
        \includegraphics[width=\linewidth]{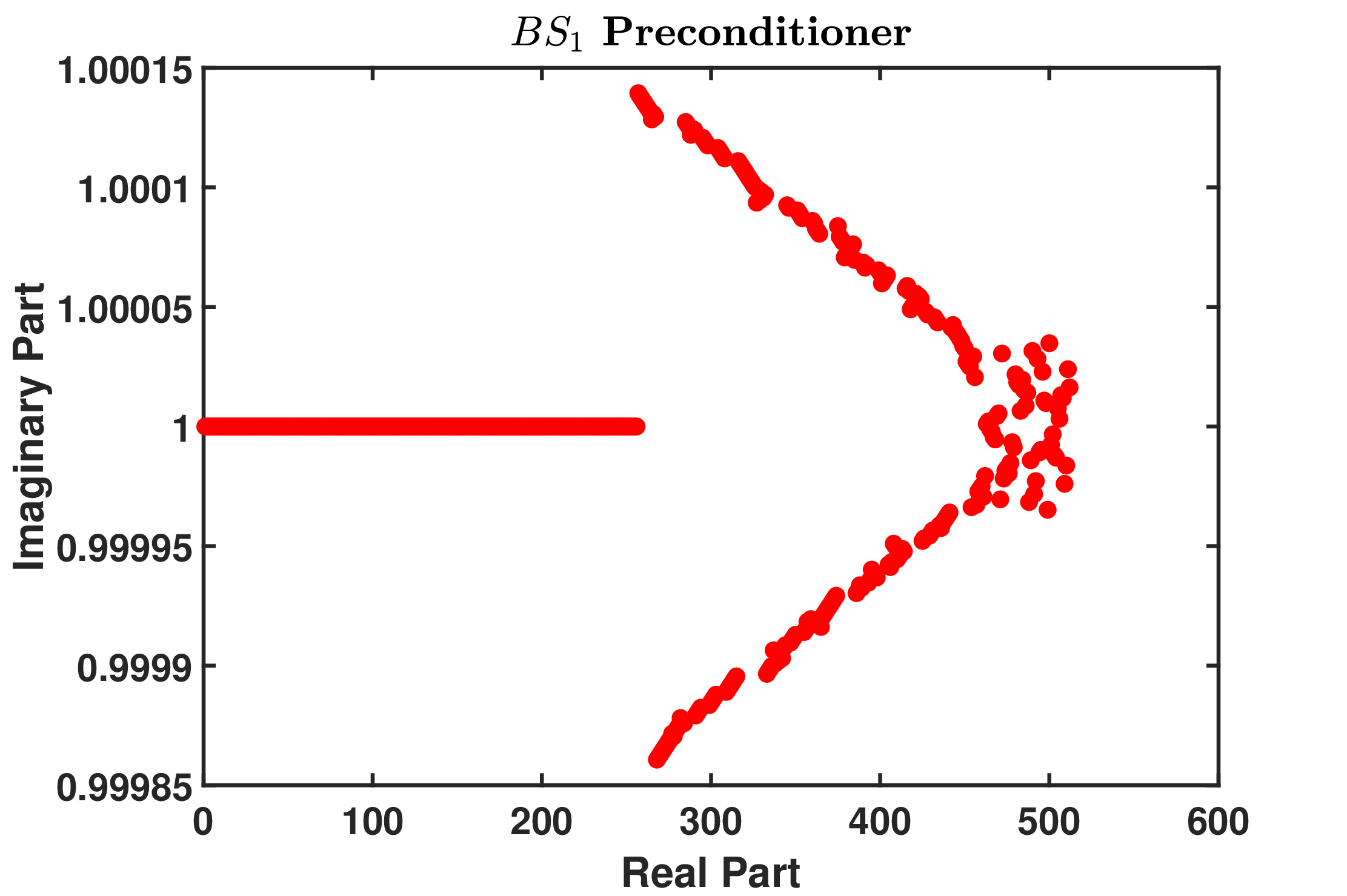}
    \end{minipage}
    \hfill
    \begin{minipage}{0.4\textwidth}
        \centering
        \includegraphics[width=\linewidth]{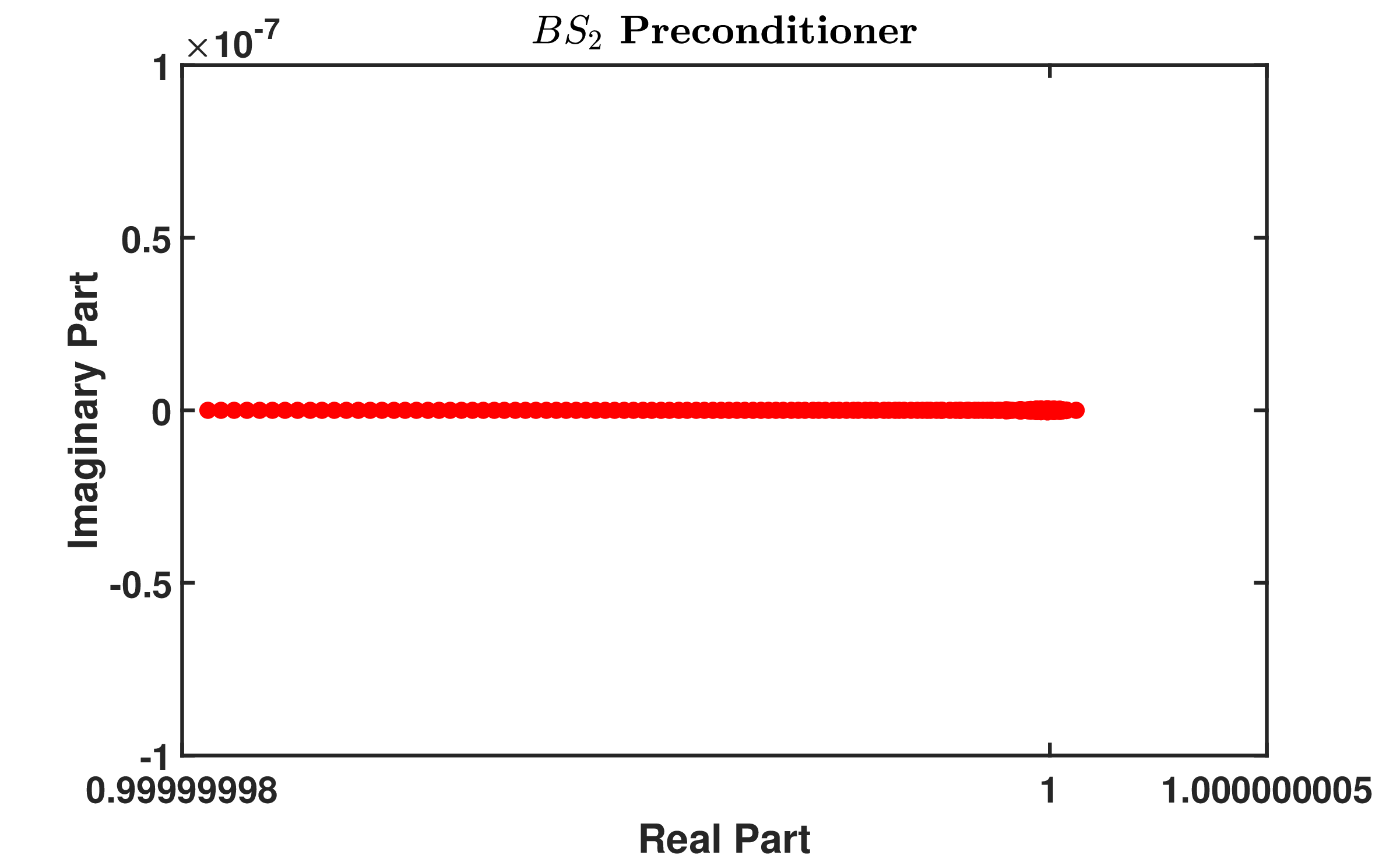}
    \end{minipage}
\begin{minipage}{0.39\textwidth}
        \centering
        \includegraphics[width=\linewidth]{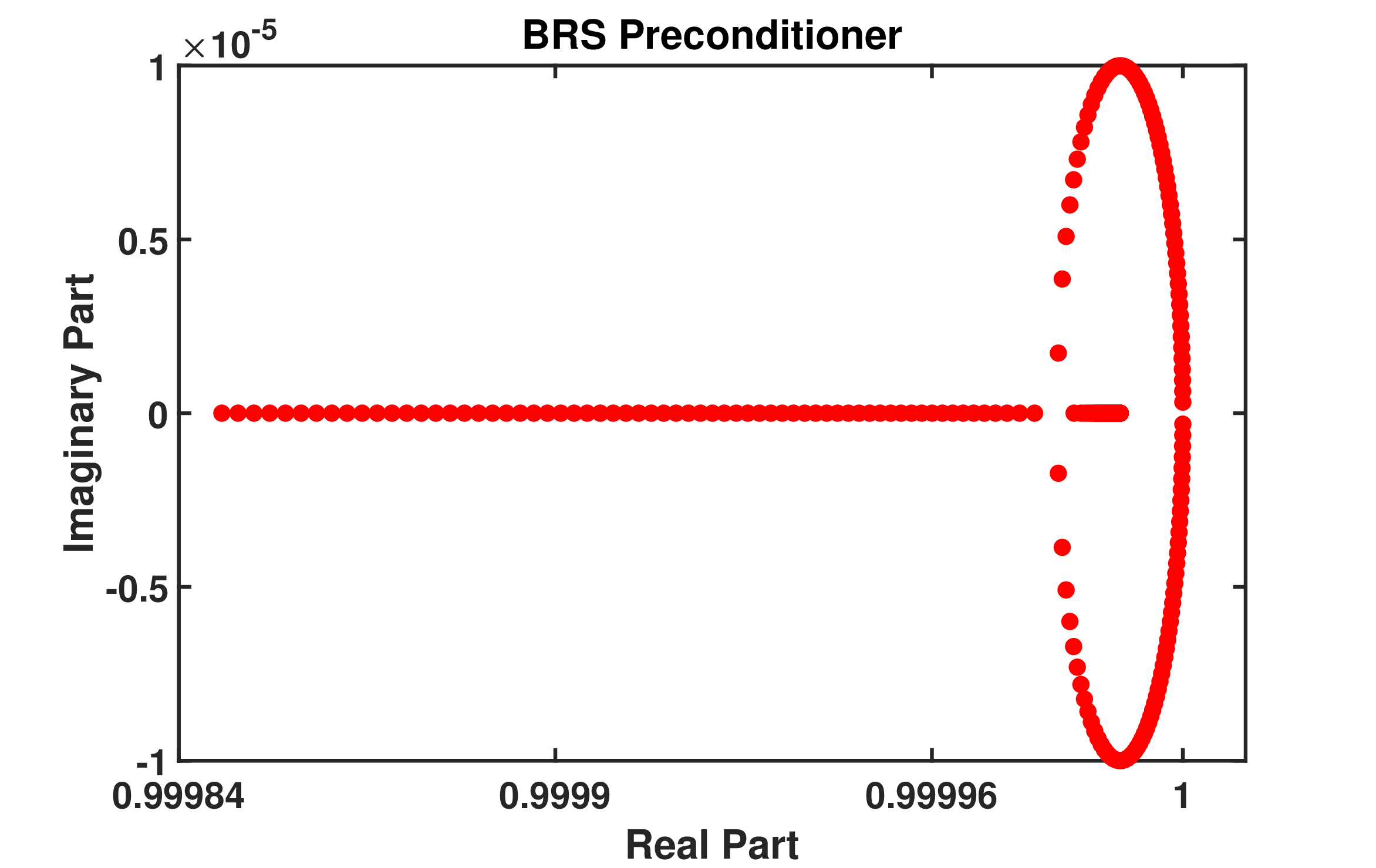}
    \end{minipage}
    \begin{minipage}{0.36\textwidth}
        \centering
        \includegraphics[width=\linewidth]{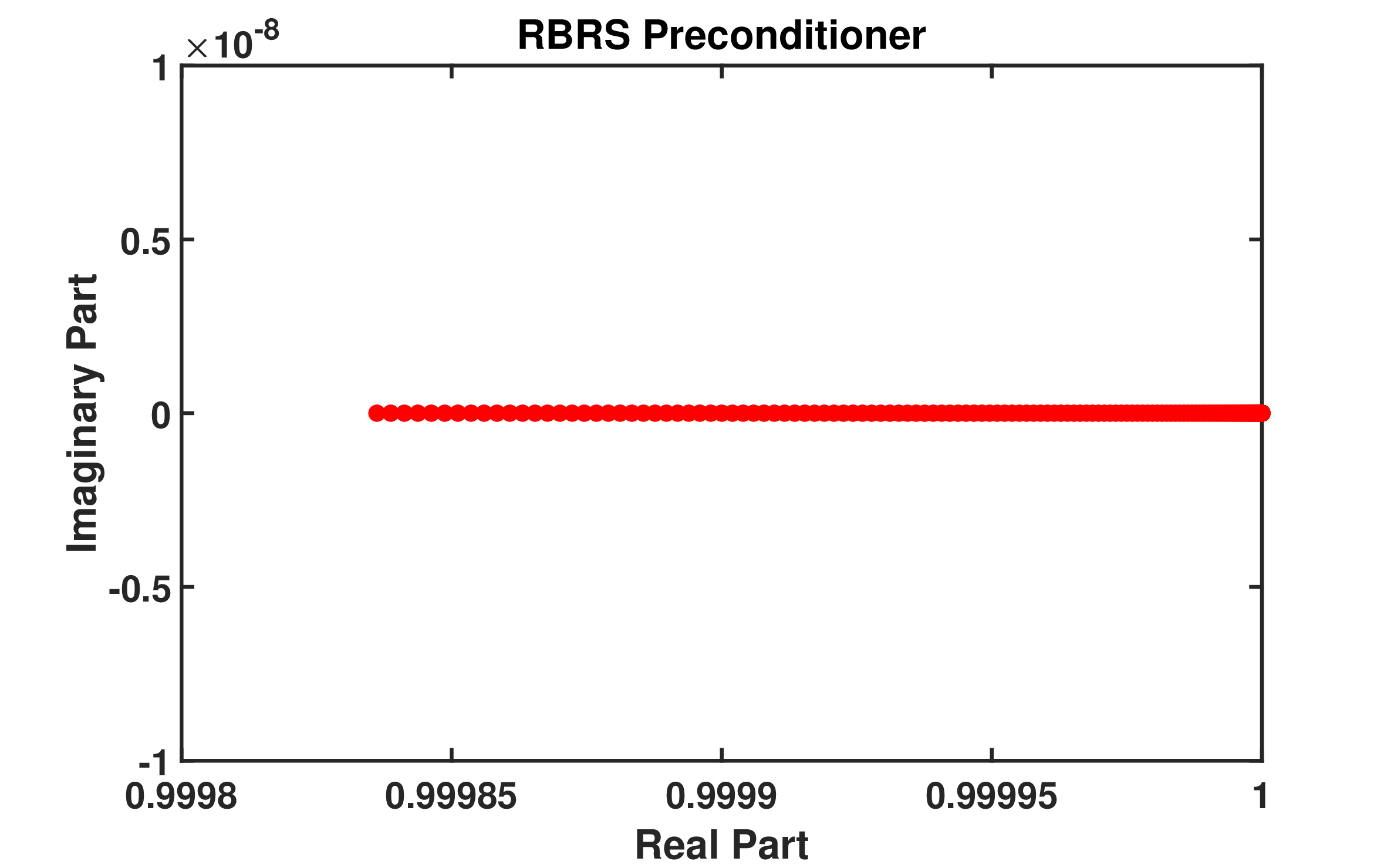}
    \end{minipage}
    \caption{Eigenvalue distributions of the original and preconditioned systems for Example \ref{exam1}}
\end{figure}

Figure~\ref{eigen:ex1} compares the eigenvalue distributions of the original and preconditioned systems. The eigenvalues of the unpreconditioned matrix are highly scattered, whereas the $\mathrm{BS}_1$ preconditioner only partially improves the spectral distribution. The proposed BRS and RBRS preconditioners significantly cluster the eigenvalues around the point $1$, with RBRS exhibiting the tightest clustering with all real eigenvalues equal to one or less than one. Such a favorable eigenvalue distribution is well known to accelerate the convergence of Krylov subspace methods, which is consistent with the iteration counts and CPU times reported in the numerical experiments.
\begin{table}[ht]
\centering
\caption{Estimated $2$-norm condition numbers of the original and preconditioned matrices for the generated matrix in Example \ref{exam1}.}
\label{tab1:cond_numbers}
\begin{tabular}{cccc}
\toprule
\textbf{ $(p,n)$} & $\kappa_2(\mathcal K)$ & $\kappa_2(\mathscr{P}_{\text{BRS}}^{-1}\mathcal K)$ & $\kappa_2(\mathscr{P}_{\text{RBRS}}^{-1}\mathcal K)$ \\
\midrule
$(64,32)$     & $9.2290\times10^{3}$ & $1.027759$   & $1.027991$ \\
$(128,64)$    & $2.0314\times10^{4}$ & $1.080281$   & $1.080613$ \\
$(256,128)$   & $2.4150\times10^{4}$ & $1.232846$   & $1.233950$ \\
$(1024,512)$  & $6.3886\times10^{4}$ & $3.736119$   & $3.812940$ \\
$(2048,1024)$ & $1.1950\times10^{5}$ & $1.622671\times10^{1}$ & $1.590080\times10^{1}$ \\
$(4096,2048)$ & $4.9270\times10^{5}$ & $1.257223\times10^{2}$ & $1.265465\times10^{2}$ \\
\bottomrule
\end{tabular}
\end{table}

\begin{figure}[h!]
    \centering
    \begin{minipage}{0.4\textwidth}
        \centering
        \includegraphics[width=\linewidth]{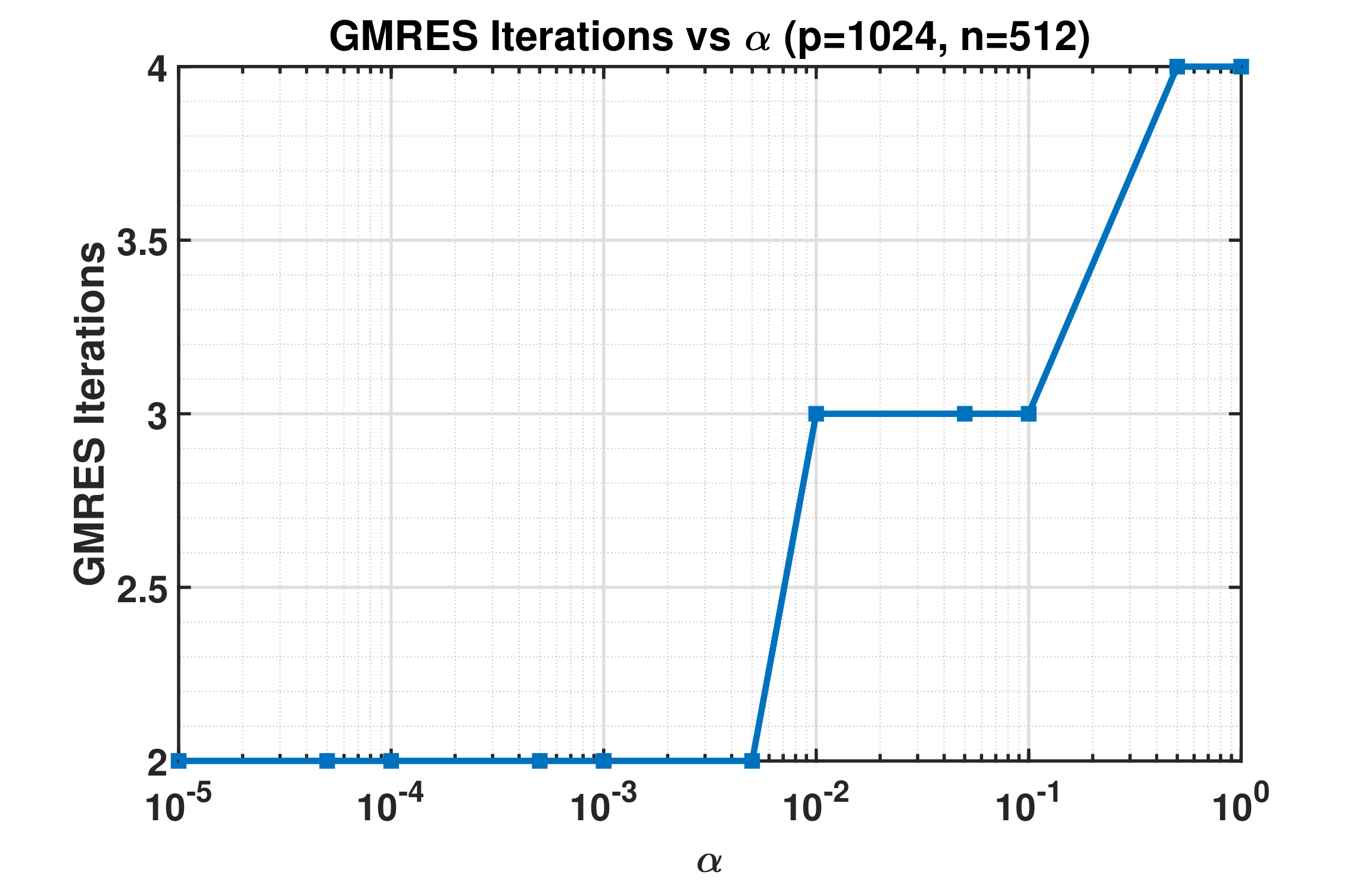}
    \end{minipage}
     \hfill
    \begin{minipage}{0.4\textwidth}
        \centering
        \includegraphics[width=\linewidth]{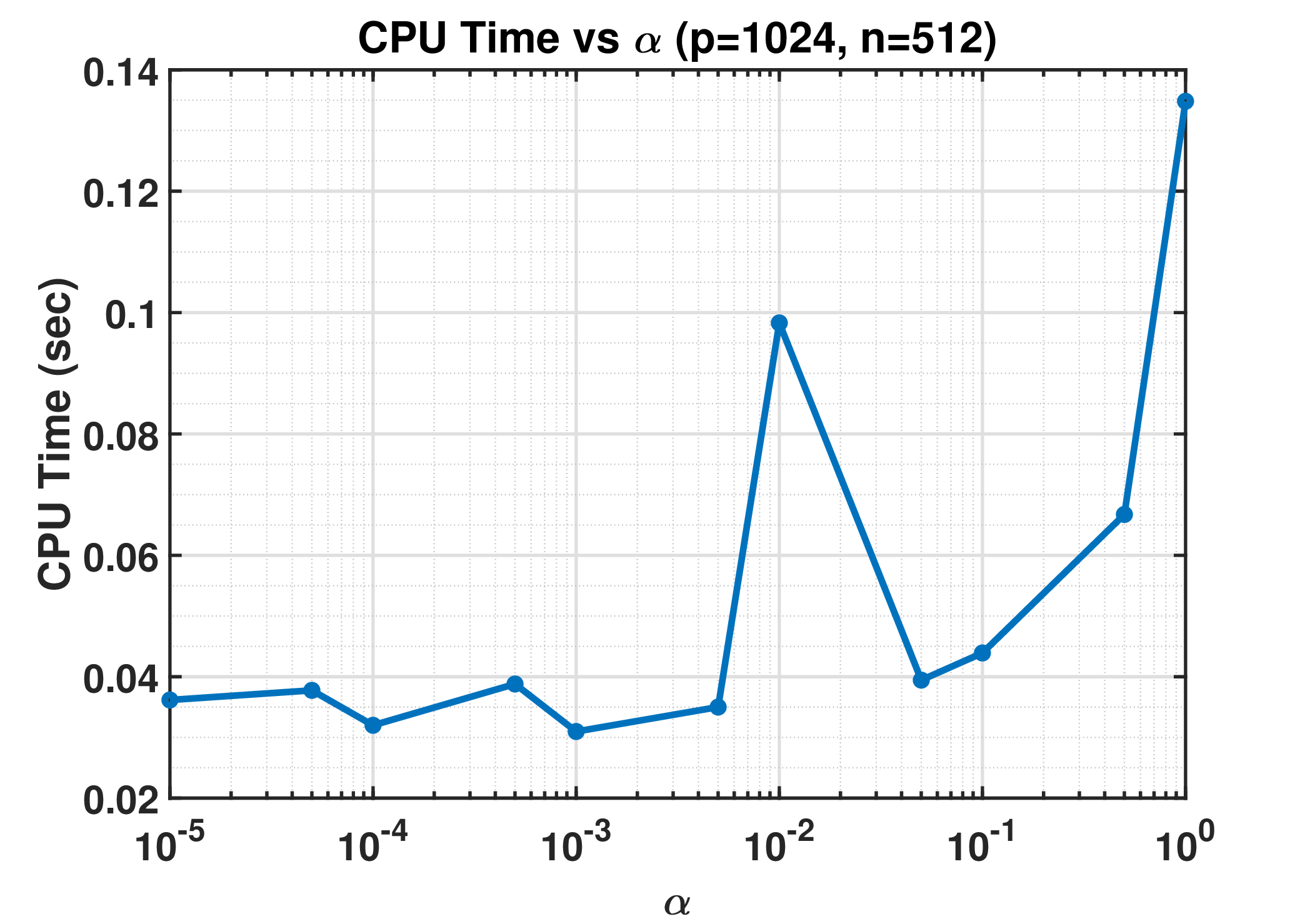}
    \end{minipage}
    \hfill
    \caption{Comparison of $\alpha$ versus IT and CPU time for Example \ref{exam1}}
    \label{alpha_cpu_it}
\end{figure}
The $\mathrm{RBRS}$ preconditioner also yields substantial reductions in 
the residual and error compared with GMRES. Overall, the results confirm that the proposed $\mathrm{BRS}$ and RBRS
preconditioners are particularly effective, providing rapid convergence and 
highly accurate solutions for all tested problem sizes.

Table~\ref{tab1:cond_numbers} reports the estimated $2$-norm condition numbers ($\kappa_2(A)=\|A\|_2 \|A^{-1}\|_2$) of the original coefficient matrix $\mathcal{K}$ and the corresponding preconditioned matrices $\mathscr{P}_{\text{BRS}}^{-1}\mathcal{K}$ and $\mathscr{P}_{\text{RBRS}}^{-1}\mathcal{K}$ for the test problems considered in Example~\ref{exam1}. These condition numbers are included to evaluate the effectiveness of the proposed preconditioners in improving the spectral properties of the linear system. As can be seen, the condition number of the original matrix increases rapidly with the problem size, indicating that the systems become increasingly ill-conditioned. In contrast, both the BRS and RBRS preconditioners reduce the condition numbers dramatically, bringing them close to one for the smaller problems and keeping them significantly lower than that of the original matrix even for the largest test cases. Moreover, the BRS and RBRS preconditioners exhibit nearly identical conditioning behavior, suggesting that the relaxed variant preserves the excellent conditioning properties of the original BRS preconditioner while maintaining robustness for large-scale problems.

Figure~\ref{alpha_cpu_it} illustrates the influence of the parameter $\alpha$ on the performance of the BRS preconditioner. It is observed that the GMRES iteration count remains nearly constant for smaller values of $\alpha$, indicating robust convergence. As $\alpha$ increases, both the iteration count and CPU time increase gradually, reflecting a slight deterioration in the efficiency of the preconditioner. Therefore, smaller values of $\alpha$ provide the best overall computational performance.
\end{example}

\begin{example}\label{example2}
We consider matrices $A_1$ derived from the Tolosa matrix, denoted as \texttt{TOLS340}, \texttt{TOLS1090}, \texttt{TOLS2000}, and \texttt{TOLS4000}. 
The Tolosa matrix arises from the stability analysis of an airplane in flight and has been studied at CERFACS in collaboration with the Aerospatiale Aircraft division. 
It is a sparse $5 \times 5$ block matrix of order $n = 90 + 5k$.

To ensure that the ILS problem (1.1) has a unique solution, we define
\[
A_2 = 0.3 \cdot I_{q \times n}, \quad 
b_1 = \text{rand}(p,1), \quad 
b_2 = \text{rand}(q,1),
\]
where $I_{q \times n}$ denotes the identity matrix of size $q \times n$.

In the implementation of the BRS and RBRS preconditioners, the positive parameter $\alpha$ should ideally be as small as possible. 
For BRS and RBRS preconditioners, we chose $S = 0.01 I_n$ and $\alpha=1e-05.$
\begin{table}[htbp]
\centering
\caption{Numerical results for different preconditioned GMRES methods for Example \ref{example2}}
\label{tab:exp2}
\begin{tabular}{ccccccccc}
\toprule
Size &  & GMRES & $\mathrm{BS}_1$ &$\mathrm{BS}_2$ &  BRS & RBRS\\
\midrule
\multirow{4}{*}{\texttt{TOLS340}}
 & IT  & 179 & 4 & 4 &   3 & 3\\
 & CPU & 0.9180 & 0.0203 & 0.0283 &   0.0345 & 0.0121\\
 & RES & 6.2694E-07 & 1.0404E-09 & 1.6282E-07 &  2.2871E-07 & 4.0133E-07\\
 & ERR & 1.0017E-06 & 1.1756E-09 & 2.1177E-06 &  4.3437E-07 & 2.5969E-07\\
\midrule
\multirow{4}{*}{\texttt{TOLS1090}}
 & IT  & 479 & 4 & 4 &   3 & 3\\
 & CPU & 9.6416 & 0.0225 & 0.0247 &   0.0200 & 0.0170\\
 & RES & 7.1667E-07 & 1.1453E-09 & 2.8069E-07 &  4.2175E-07 & 2.3733E-07\\
 & ERR & 5.6700E-07 & 7.0129E-10 & 2.3761E-06 &   8.0669E-07 & 1.3025E-06\\
\midrule
\multirow{4}{*}{\texttt{TOLS2000}}
 & IT  & 843 & 4 & 4 &   3 & 3\\
 & CPU & 41.0634 & 0.0308 & 0.0357 &   0.0163 & 0.0148\\
 & RES & 5.1050E-07 & 1.1838E-09 & 3.7580E-07 &   5.2954E-07 & 5.5227E-07\\
 & ERR & 2.1502E-07 & 4.2829E-10 & 2.4945E-06 &   4.8938E-07 & 7.2635E-07\\
\midrule
\multirow{4}{*}{\texttt{TOLS4000}}
 & IT  & 1643 & 4 & 4 &   3 & 3\\
 & CPU & 181.9980 & 0.0491 & 0.0568 &   0.0150 & 0.0512\\
 & RES & 2.1201E-07 & 7.4274E-09 & 4.9962E-07 &   7.5319E-07 & 7.4687E-07\\
 & ERR & 8.7837E-08 & 3.7201E-09 & 2.5667E-06 &   4.2183E-07 & 1.9738E-07\\
\bottomrule
\end{tabular}
\end{table}
\begin{figure}[h!]
    \centering
    \begin{minipage}{0.4\textwidth}
        \centering
        \includegraphics[width=\linewidth]{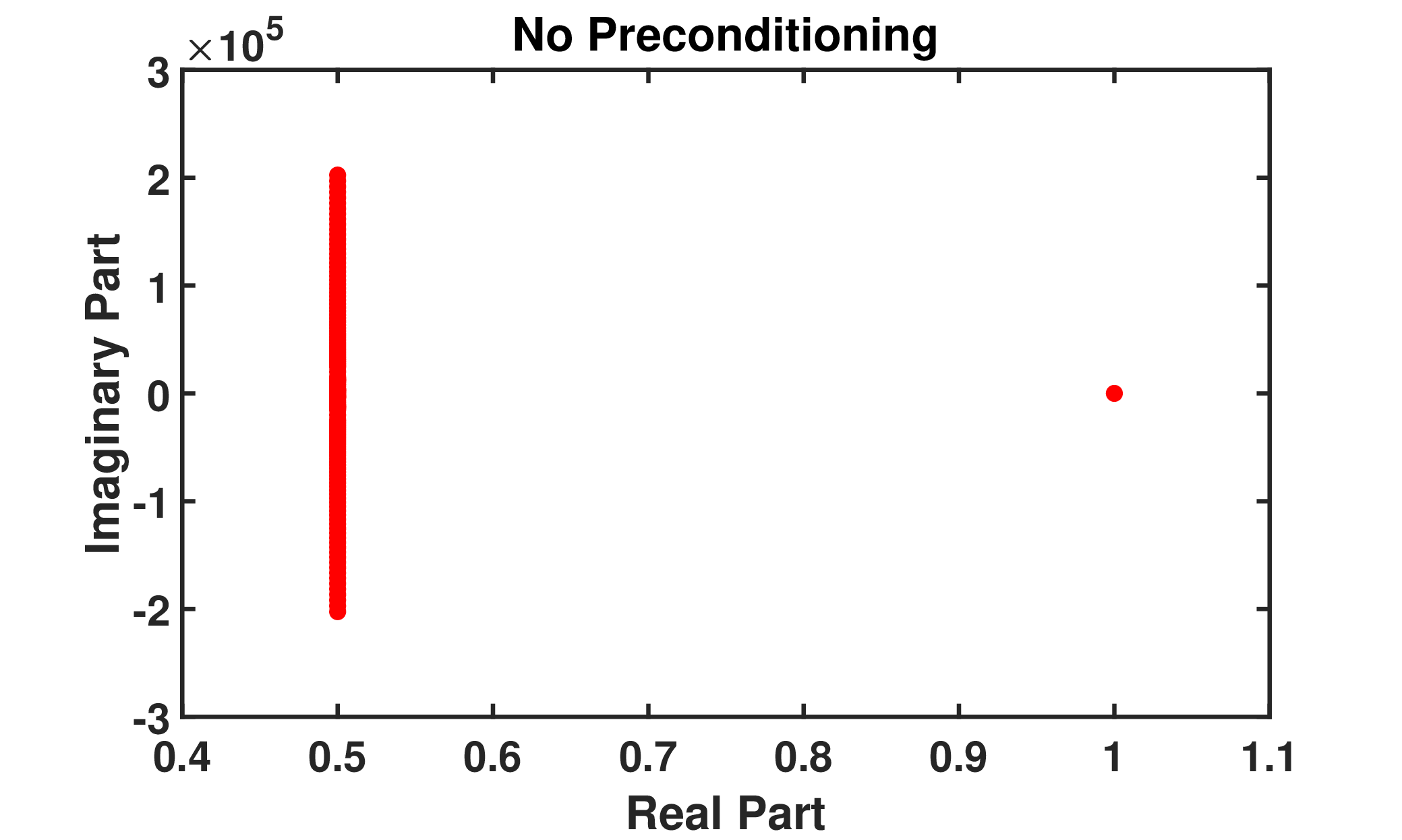}
    \end{minipage}
    \begin{minipage}{0.37\textwidth}
        \centering
        \includegraphics[width=\linewidth]{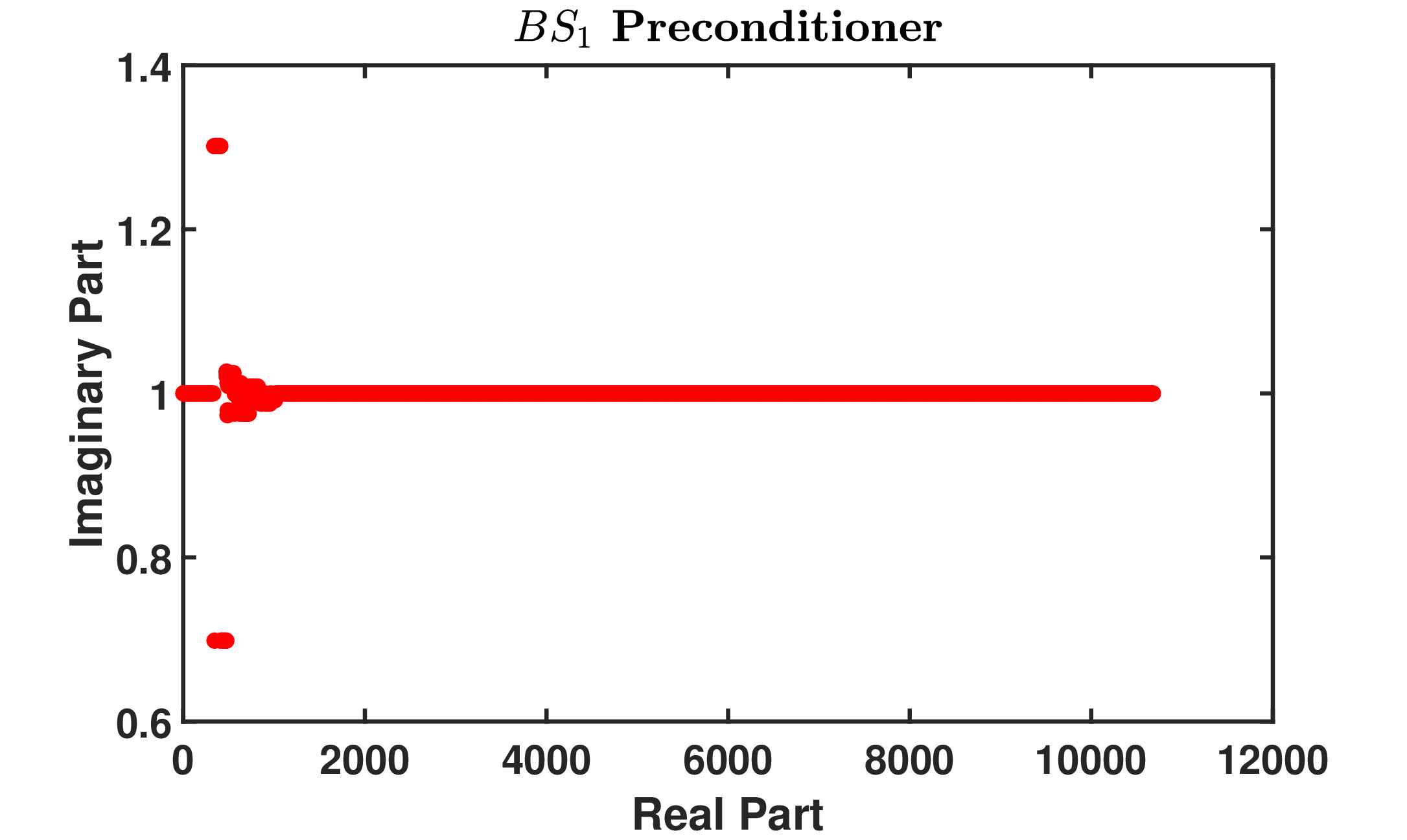}
    \end{minipage}
    \hfill
    \begin{minipage}{0.4\textwidth}
        \centering
        \includegraphics[width=\linewidth]{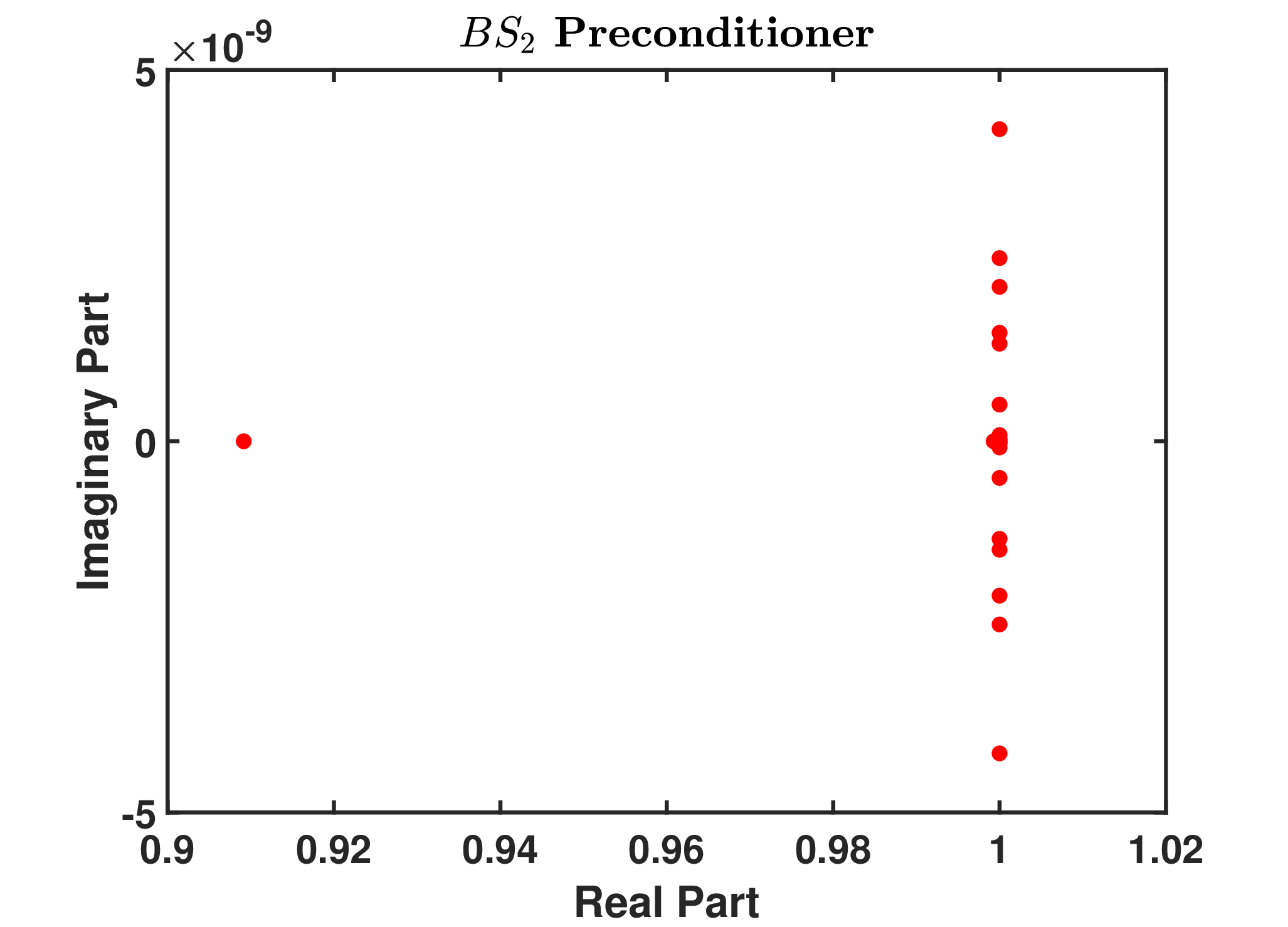}
    \end{minipage}
\begin{minipage}{0.39\textwidth}
        \centering
        \includegraphics[width=\linewidth]{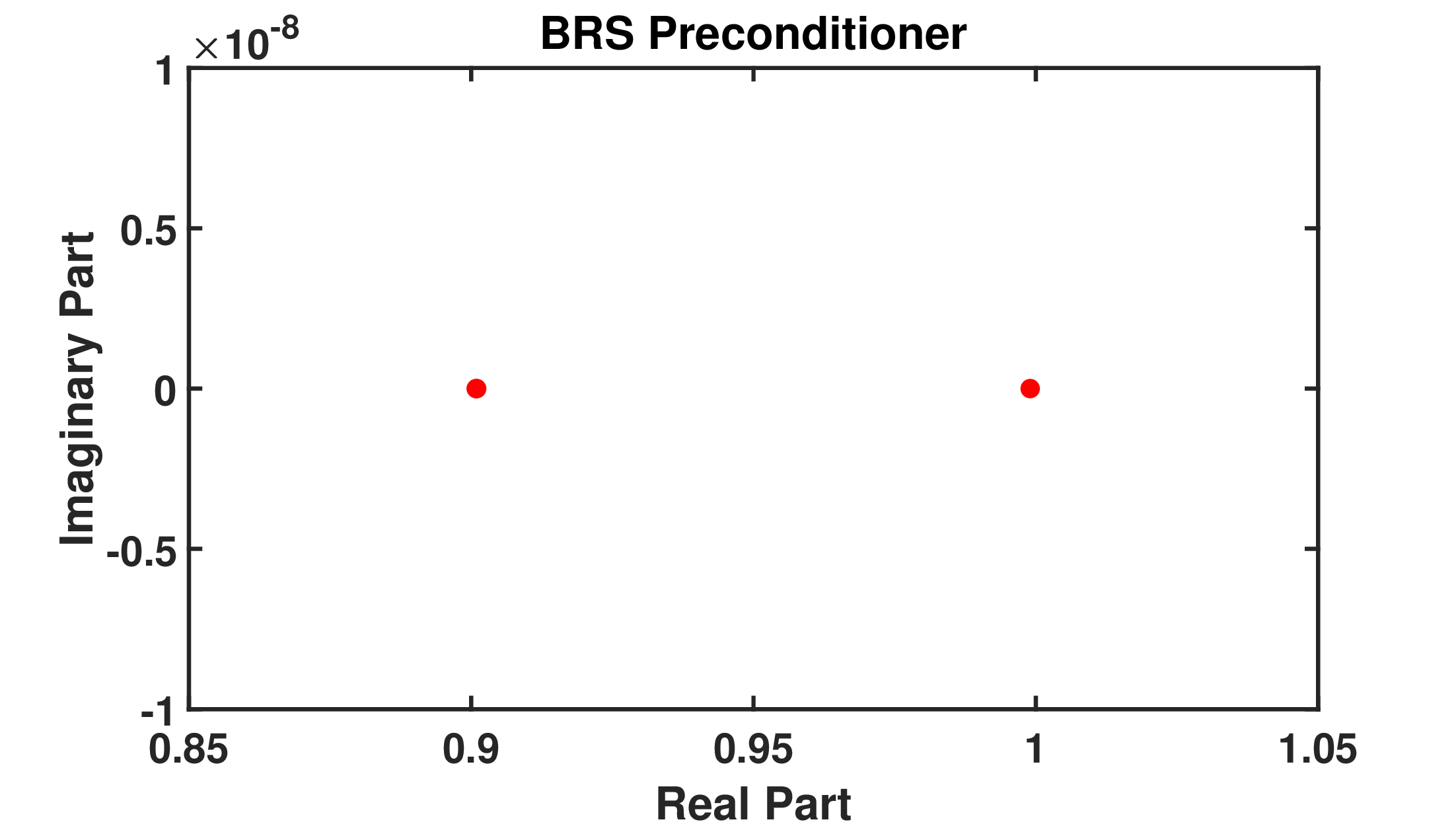}
    \end{minipage}
    \begin{minipage}{0.36\textwidth}
        \centering
        \includegraphics[width=\linewidth]{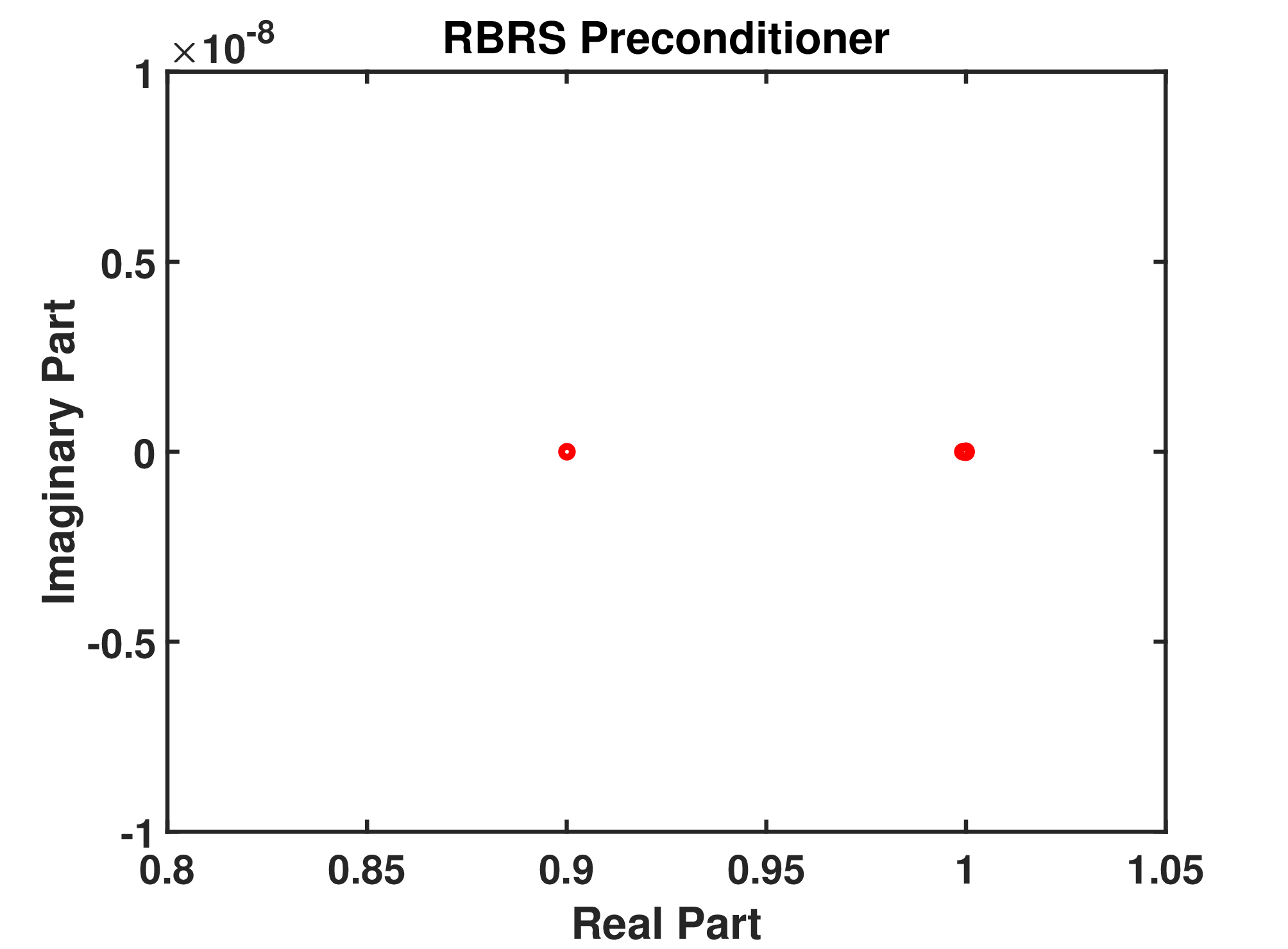}
    \end{minipage}
    \caption{Eigenvalue distributions of the original and preconditioned systems for Example \ref{example2}}
    \label{fig:eigenex2}
\end{figure}

Table~\ref{tab:exp2} compares the performance of the proposed BRS and RBRS preconditioners with GMRES and the recently proposed $\mathrm{BS}_1$ and $\mathrm{BS}_2$ preconditioners. It is observed that both proposed preconditioners outperform GMRES and the other two $\mathrm{BS}_1$ and $\mathrm{BS}_2$ preconditioned methods for all test problems. The RBRS preconditioner consistently requires lower CPU time than the other preconditioners in almost all cases. Although $\mathrm{BS}_1$ attains the smallest residuals and errors, the proposed BRS and RBRS achieve residuals and errors of the same order of magnitude while requiring only three GMRES iterations. In contrast, $\mathrm{BS}_2$ exhibits significantly larger errors than the proposed methods despite requiring four iterations. Overall, the proposed BRS and RBRS preconditioners provide a better balance between computational efficiency and numerical accuracy, with RBRS being the fastest preconditioner among all those considered.


\begin{table}[ht]
\centering
\caption{Estimated $2$-norm condition numbers of the original and preconditioned matrices for the Tolosa test problems in Example \ref{example2}.}
\label{tab:cond_numbers}
\begin{tabular}{lccc}
\toprule
\textbf{Matrix} & $\kappa_2(\mathcal K)$ & $\kappa_2(\mathscr{P}_{\text{BRS}}^{-1}\mathcal K)$ & $\kappa_2(\mathscr{P}_{\text{RBRS}}^{-1}\mathcal K)$ \\
\midrule
\tt TOLS340  & $5.5745\times10^{5}$ & $2.7763$ & $2.7773$ \\
\tt TOLS1090 & $5.0329\times10^{6}$ & $2.7757$ & $2.7756$ \\
\tt TOLS2000 & $1.6390\times10^{7}$ & $2.7756$ & $2.7755$ \\
\tt TOLS4000 & $6.4427\times10^{7}$ & $2.7755$ & $2.7754$ \\
\bottomrule
\end{tabular}
\end{table}
The eigenvalue distributions in Figure \ref{fig:eigenex2} clearly demonstrate the effectiveness of the proposed preconditioners. Without preconditioning, the eigenvalues are widely spread along a vertical line around 0.5, with a large imaginary component. In contrast, all the preconditioned systems (except $\mathrm{BS}_1$) exhibit strong eigenvalue clustering near 1 on the real axis, while their imaginary parts are reduced to nearly zero. In particular, the BRS and RBRS preconditioners produce eigenvalues that are essentially concentrated at 1, indicating a highly favorable spectral distribution. The $\mathrm{BS}_1$ and $\mathrm{BS}_2$
 preconditioners also significantly reduce the spread of the eigenvalues compared with the unpreconditioned case. Overall, the clustering of the preconditioned eigenvalues around $1$ suggests improved conditioning and explains the faster convergence expected for the corresponding iterative methods.

The results in Table~\ref{tab:cond_numbers} show that the original matrix $\mathcal K$ is highly ill-conditioned, with its condition number increasing significantly as the problem size grows. In contrast, the preconditioned matrices $\mathscr{P}_{\text{BRS}}^{-1}\mathcal K$ and $\mathscr{P}_{\text{RBRS}}^{-1}\mathcal K$ exhibit condition numbers close to one for all test problems, indicating a substantial improvement in the conditioning of the system. This demonstrates the effectiveness and robustness of the iterative solvers for both preconditioners, even in high-dimensional problems.
\end{example}
\section{Conclusions}\label{SEC6}
This paper proposed two preconditioners, namely the BRS and RBRS preconditioners, to solve the ILS problem using a regularized matrix splitting of the coefficient matrix $\mathcal{K}$ of the augmented system. Using the same splitting, we also proposed the BRS iterative scheme to solve the augmented system corresponding to the ILS problem. Convergence analysis has been done for the iterative scheme, and it has been shown that the method converges unconditionally for any $\alpha>0.$ Furthermore, spectral properties analysis of the preconditioned matrices has been performed for both preconditioners. It was shown that the eigenvalues of the BRS-preconditioned matrix lie strictly inside a circle in the complex plane with a spectral radius strictly less than one. For the RBRS preconditioned matrix, it was proved that it possesses \(p\) eigenvalues equal to \(1\), while all remaining eigenvalues are real and lie in \((0, 1]\). These favorable spectral properties provide a theoretical explanation for the effectiveness of the proposed preconditioners. Finally, comprehensive numerical experiments on different classes of test problems demonstrate the robustness and superiority of the proposed BRS and RBRS preconditioners over several existing block splitting preconditioners. 
\medskip


\section*{Declarations}
\subsection*{Funding} 
Not applicable.
\subsection*{Competing interests}
The authors have no competing interests.
\subsection*{Ethics approval} 
Not applicable.
\subsection*{Data availability} 
Not applicable.
\subsection*{Materials availability} 
Not applicable.

\bibliography{sn-bibliography}

\end{document}